\documentclass{article}
\parindent=0cm



\usepackage{amsmath}
\usepackage{amssymb}


\newenvironment{proof}{\noindent {\bf Proof}}
{\hfill $\square$ \vspace{0.25cm}}


\newcommand{\ds}{\displaystyle}
\newcommand{\rr}{{\mathbb{R}}}
\newcommand{\nn}{{\mathbb{N}}}
\newcommand{\intot}{\ds\int_0^t}
\newcommand{\intotug}{\ds\int_0^t\int_0^\infty\int_G}
\newcommand{\intrd}{\ds\int_{\rr}}

\newcommand{\intg}{\ds\int_G}
\newcommand{\intgi}{\ds\int_{G_i}}
\newcommand{\sm}{{s-}}
\newcommand{\indiq}{{\bf 1}}
\newcommand{\ala}{\nonumber \\}
\newcommand{\cM}{{{\cal M}}}
\newcommand{\cH}{{{\cal H}}}
\newcommand{\cG}{{{\cal G}}}
\newcommand{\cB}{{{\cal B}}}
\newcommand{\cF}{{{\cal F}}}
\newcommand{\cL}{{{\cal L}}}
\newcommand{\tf}{{{\tilde f}}}
\newcommand{\crl}{{ \left( \begin{matrix} l \\ r \end{matrix} \right)}}
\newcommand{\cjl}{{ \left( \begin{matrix} l \\ j \end{matrix} \right)}}
\newcommand{\cnl}{{ \left( \begin{matrix} l \\ n \end{matrix} \right)}}

\newcommand{\e}{\epsilon}
\newcommand{\wbk}{{\bar{W}^{k,1}(\rr)}}
\newcommand{\vip}{\vskip0.2cm}


\newtheorem{theo}{Theorem}[section]
\newtheorem{pro}[theo]{Proposition}

\newtheorem{lem}[theo]{Lemma}



\begin{document}

\title{Smoothness of the law of some one-dimensional jumping S.D.E.s
with non-constant rate of jump}
\author{Nicolas {\sc Fournier}\footnote{ 
Centre de Maths, Facult\'e des Sciences et Technologies,
Universit\'e Paris 12, 61 avenue du G\'en\'eral de Gaulle, 
94010 Cr\'eteil Cedex, France. 
E-mail: {\tt nicolas.fournier@univ-paris12.fr}}}
\maketitle

\def\abstractname{Abstract}
\begin{abstract}
\noindent We consider a one-dimensional
jumping Markov process $\{X^x_t\}_{t \geq 0}$,
solving a Poisson-driven stochastic differential equation.
We prove that the law of $X^x_t$ admits a smooth density for $t>0$,
under some regularity and non-degeneracy assumptions on the coefficients 
of the S.D.E.
To our knowledge, our result is the first one including the 
important case of a non-constant rate of jump.
The main difficulty is that in such a case, the map $x \mapsto X^x_t$
is not smooth. This seems to make impossible the use of 
Malliavin calculus techniques.
To overcome this problem, we introduce a new method, in which 
the propagation of the smoothness of the density is obtained
by analytic arguments.
\end{abstract}

{\it Key words} : Stochastic differential equations, Jump processes, 
Regularity of the density.

{\it MSC 2000} : 60H10, 60J75.

\section{Introduction}\label{intro}

Consider a $\rr$-valued
Markov process with jumps $\{X^x_t\}_{t \geq 0}$, starting from 
$x \in \rr$, with generator $\cL$, defined for $\phi: \rr \mapsto \rr$
sufficiently smooth 
and $y \in \rr$, by
\begin{equation}\label{gi}
\cL \phi(y) = b(y) \phi' (y) + \gamma(y) \intg 
\left[ \phi(y+h(y,z)) -\phi(y) \right] q(dz),
\end{equation}
for some functions $\gamma,b: \rr \mapsto \rr$ with $\gamma$ nonnegative,
for some measurable space $G$ endowed with a nonnegative measure $q$,
and some function $h:\rr\times G \mapsto \rr$.

\vip

Roughly, $b(y)$ is the {\it drift} term: between $t$ and $t+dt$, 
$X^x_t$ moves from $y$ to $y+b(y)dt$. Next, $\gamma(y)q(dz)$ stands
for the {\it rate} at which $X^x_t$ jumps from
$y$ to $y+h(y,z)$.

\vip

We aim to investigate the smoothness of the law of
$X_t^x$ for $t>0$. 
Most of the known results are based on the use of
some Malliavin calculus, i.e. on a sort of
{\it differential calculus} with respect to the stochastic variable $\omega$.

\vip

The first results in this direction were obtained by Bismut \cite{b},
see also L\'eandre \cite{l}. 
Important results are due Bichteler et al. \cite{bgj}. 
We refer to Graham-M{\'e}l{\'e}ard \cite{gm}, Fournier \cite{fb2d}
and Fournier-Giet \cite{fg} for relevant applications to physic
integro-differential
equations such as the Boltzmann and the coagulation-fragmentation equations.
These results concern the case where $q(dz)$ is sufficiently smooth.

When $q$ is singular, Picard \cite{p} obtained some results
using some fine arguments relying on the affluence of small 
(possibly irregular) jumps. Denis \cite{d} and more recently Bally \cite{bally}
also obtained some regularity results when $q$ is singular, using 
the drift and the density of the jump instants, see also Nourdin-Simon
\cite{ns}.

\vip

All the previously cited works apply only to the case where 
the {\it rate of jump} $\gamma(y)$ is constant.
The case where
$\gamma$ is non constant is much more delicate. The main reason for this 
is that in such a case, the map $x \mapsto X^x_t$ cannot be regular 
(and even continuous). Indeed, if $\gamma(x)< \gamma(y)$, and if 
$q(G)= \infty$, then it is clear that for all small $t>0$,
$X^y$ jumps infinitely more often than $X^x$  before $t$. The only
available results with $\gamma$ not constant seem to be those of 
\cite{fou,fg}, where only the existence of a density was proved.
Bally \cite{bally} considers the case where $\gamma(y)q(dz)$
is replaced by something like $\gamma(y,z)q(dz)$, with
$\sup_y |\gamma(y,z)-1| \in L^1(q)$: the rate of jump is not constant,
but this concerns only finitely many jumps.

\vip

From a physical point of view, the situation where $\gamma$ is constant
is quite particular. For example in the (nonlinear)
Boltzmann equation, which describes
the distribution of velocities in a gas,
the rate of collision between two particles heavily depends on
their relative velocity (except in the so-called Maxwellian case treated
in \cite{gm,fb2d}).
In a fragmentation equation, describing the distribution of
masses in a system of particles subjected to breakage,
the rate at which a particle splits into smaller ones
will clearly almost always depend on its mass...

\vip

We will show here that when $q$ is smooth enough,
it is possible to obtain some regularity results
in the spirit of \cite{bgj}. 
Compared to \cite{bgj}, our result is

$\bullet$ stronger, since we allow $\gamma$ to be non-constant;

$\bullet$ weaker, since we are not able, at the moment, to study the case
of processes with infinite variations, and 
since we treat only the one-dimensional case
(our method could also apply to multidimensional processes, but
our non-degeneracy conditions would be very strong).

\vip

Our method relies on the following simple ideas:

(a) we consider, for $n\geq 1$, the first jump instant $\tau_n$ 
of the Poisson measure driving $X^x$, such that the corresponding {\it mark}
$Z_n$ falls in a
subset $G_n\subset G$ with $q(G_n)\simeq n$;

(b) using some smoothness assumptions on $q$ and $h$, we deduce that
$X^x_{\tau_n}$ has a smooth density (less and less smooth as $n$ tends to 
infinity);

(c) we also show that smoothness propagates with time in some sense,
so that $X^x_t$ has a smooth density conditionnally to $\{t\geq \tau_n\}$;

(d) we conclude by choosing carefully $n$ very large in such a way that 
$\{t \geq \tau_n\}$ occurs with sufficiently great probability.

\vip

As a conclusion, we obtain the smoothness of the density using only
the regularizing property of {\it one} (well-chosen) jump.
On the contrary, Bichteler et al. \cite{bgj} were
using the regularization of infinitely many jumps, which
was possible using a sort of Malliavin calculus.
Surprisingly, our non-degeneracy condition does not seem
to be stronger, see Subsection \ref{tictic} 
for a detailed comparison in a particular (but quite typical) example.


\vip

We present our results in Section \ref{results},
and we give the proofs in Sections \ref{mainproof} and \ref{propasmooth}.
An Appendix lies at the end of the paper.

\section{Results}\label{results}

In the whole paper, $\nn=\{1,2,...\}$.
Consider the one-dimensional S.D.E.
\begin{equation}\label{sde}
X_t^x = x + \intot b(X_s^x) ds+ \intotug 
h(X_\sm^x,z)\indiq_{\{u\leq \gamma(X_\sm^x)\}}N(ds,du,dz),
\end{equation}
where

\vip

{\bf Assumption $(I)$:} The Poisson measure $N(ds,du,dz)$
on $[0,\infty) \times [0,\infty)\times G$ has the intensity
measure $ds du q(dz)$, for some
measurable space $(G,\cG)$ endowed with a nonnegative measure $q$.
For each $t\geq 0$ we set $\cF_t:=\sigma\{N(A), A \in \cB([0,t])\otimes
\cB([0,\infty))\otimes\cG\}$. 

\vip

We will require some smoothness of the coefficients.
For $f(y):\rr \mapsto \rr$ (and $h(y,z):\rr\times G \mapsto \rr$), 
we will denote by $f^{(l)}$ (and $h^{(l)}$) the $l$-th
derivative of $f$ (resp. of $h$ with respect to $y$). Below,
$k \in \nn$ and $p\in [1,\infty)$ are fixed.

\vip

{\bf Assumption $(A_{k,p})$:} The functions
$b:\rr \mapsto\rr $ and $\gamma:\rr \mapsto\rr_+ $
are of class $C^k$, with all their derivatives
of order $0$ to $k$ bounded.

The function $h:\rr \times G \mapsto\rr$ is measurable, and for each
$z \in G$, $y\mapsto h(y,z)$ is of class $C^k$ on $\rr$. 
There exists $\eta \in (L^1\cap L^p)(G, q)$ 
such that for all $y\in \rr$, all $z\in G$, all $l\in \{0,...,k\}$,
$|h^{(l)} (y,z)| \leq \eta(z)$.

\vip

Under $(A_{1,1})$, 
$\cL \phi$, introduced in (\ref{gi}), is well-defined for all 
$\phi \in C^1(\rr)$ with a bounded derivative. 
The following result classically holds, see e.g. \cite[Section 2]{fou}
for the proof of a similar statement.

\begin{pro}\label{exist}
Assume $(I)$ and $(A_{k,p})$ for some $p\geq 1$, some $k\geq 1$.
For any $x\in \rr$, there exists a unique c{\`a}dl{\`a}g 
$({\cal F}_t )_{t\geq 0}$-adapted process $(X^x_t)_{t\geq 0}$ solution to
(\ref{sde}) such that for all all $T\in [0,\infty)$, 
$E [ \sup_{s\in [0,T]} |X_s^x|^p ] <\infty$.

The process  $(X^x_t)_{t\geq 0,x\in \rr}$  is a strong Markov process
with generator $\cL$ defined by (\ref{gi}). We will denote by
$p(t,x,dy):=\cL(X^x_t)$ its semi-group.
\end{pro}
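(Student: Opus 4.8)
The plan is to establish existence and uniqueness by a standard interlacing (or Picard-type) construction, exploiting the fact that, thanks to Assumption $(A_{k,p})$, the jump mechanism has finite total rate whenever the state is bounded, but globally the rate may only be controlled through the $L^1$ bound $\eta \in L^1(G,q)$. First I would recall that $q$ need not be finite, so one cannot immediately order the jumps; however, the effective jump rate from a point $y$ is $\gamma(y) q(G) $ which could be infinite. The key observation is that $(A_{k,p})$ with $l=0$ gives $|h(y,z)|\le \eta(z)$ with $\eta\in L^1(q)$, so even if $q(G)=\infty$ the jumps are summable: $\int_G |h(y,z)|q(dz)\le \gamma(y)\|\eta\|_{L^1(q)}<\infty$ since $\gamma$ is bounded. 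Hence $(X^x_t)$ is a process of finite variation with an a.s. finite number of "large" jumps on compacts and an absolutely convergent sum of small ones, and equation~(\ref{sde}) makes pathwise sense.

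Next I would carry out the construction itself. For each $m\ge 1$ pick $G_m\subset G$ with $q(G_m)<\infty$ and $G_m\uparrow G$; replace $N$ by its restriction $N_m$ to $[0,\infty)\times[0,\infty)\times G_m$. Since $N_m$ has finite intensity on $[0,T]\times[0,A]\times G_m$ for every $A$, and since the indicator $\indiq_{\{u\le\gamma(X_\sm^x)\}}$ restricts $u$ to a bounded set ($\gamma$ bounded), the driving measure effectively has finitely many points on compacts, so $X^{x,m}$ is built by explicit interlacing: solve the ODE $\dot{y}=b(y)$ between successive jump times, and add $h(X^{x,m}_{\tau-},z)$ at each jump. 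The ODE is globally well-posed because $b$ is Lipschitz (bounded $C^k$, $k\ge1$). Then I would pass to the limit $m\to\infty$: using the $L^1(q)$ bound on $\eta$, a Gronwall argument on $E[\sup_{s\le t}|X^{x,m}_s-X^{x,m'}_s|]$ (splitting the extra jumps of level between $G_{m'}$ and $G_m$, whose contribution is bounded by $2\|\eta\|_{L^1(q)}\int_0^t E[\gamma(\cdot)]\,ds$ and tends to $0$) shows the sequence is Cauchy in $L^1$, uniformly on compacts; the limit $X^x$ solves~(\ref{sde}). Uniqueness follows from the same Gronwall estimate applied to two solutions, using the Lipschitz character of $b$ and the bound $|h(y,z)-h(y',z)|\le \|\eta\|_\infty\, |y-y'|$ obtained from $|h^{(1)}(\cdot,z)|\le\eta(z)$... actually more carefully $|h(y,z)-h(y',z)|\le \eta(z)|y-y'|$, again integrable in $q$. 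The moment bound $E[\sup_{s\le T}|X^x_s|^p]<\infty$ comes from Burkholder--Davis--Gundy (or just the triangle inequality since the process is of finite variation) together with $\eta\in L^p(q)$ and $\gamma,b$ bounded.

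Finally, the Markov and strong Markov properties together with the identification of the generator are obtained in the usual way: the solution map depends on the increments of $N$ only through disjoint time intervals, which yields the simple Markov property, and càdlàg paths plus the Poisson structure upgrade this to the strong Markov property at $(\cF_t)$-stopping times. Applying It\^o's formula for pure-jump finite-variation processes to $\phi(X^x_t)$ for $\phi\in C^1$ with bounded derivative, and taking expectations, identifies $\cL$ as in~(\ref{gi}); the exchange of expectation and the $dz$-integral is justified by $(A_{1,1})$, i.e. $|\phi(y+h(y,z))-\phi(y)|\le \|\phi'\|_\infty\eta(z)\in L^1(q)$.

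I expect the main obstacle to be the limiting argument when $q(G)=\infty$: one must be careful that the truncation $G_m\uparrow G$ is compatible with the nonlinear feedback of $X^x_\sm$ into the indicator $\indiq_{\{u\le\gamma(X^x_\sm)\}}$, so that the difference of two truncated processes is genuinely controlled by jumps whose marks lie in $G_m\setminus G_{m'}$ (and not also by a drift of the coupling through $\gamma$). This is handled by coupling the two constructions on the same Poisson measure $N$ and estimating the accumulated discrepancy by Gronwall, with the Lipschitz constants of $b$ and of $y\mapsto h(y,z)$ playing the role of the Gronwall coefficient; since the paper says this is classical and refers to \cite[Section 2]{fou}, I would present it briefly and refer there for the routine details.
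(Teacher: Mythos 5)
Your proof is correct and follows the standard interlacing-plus-Gronwall strategy that the paper has in mind when it states the result ``classically holds'' and refers to \cite[Section 2]{fou}; the paper itself supplies no argument, so there is no genuinely different route to compare against. One small imprecision worth correcting: the Gronwall coefficient is not controlled solely by the Lipschitz constants of $b$ and of $y\mapsto h(y,z)$. When two solutions $X^1,X^2$ driven by the same $N$ are compared, the discrepancy between $\indiq_{\{u\le\gamma(X^1_\sm)\}}$ and $\indiq_{\{u\le\gamma(X^2_\sm)\}}$ produces, after integrating in $u$, a contribution proportional to $|\gamma(X^1_\sm)-\gamma(X^2_\sm)|\int_G\eta(z)\,q(dz)$, so that the Lipschitz constant of $\gamma$ enters the Gronwall constant as well; the resulting coefficient is of order $\|b'\|_\infty+(\|\gamma\|_\infty+\|\gamma'\|_\infty)\|\eta\|_{L^1(G,q)}$. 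You flag the feedback through $\gamma$ as the delicate point, but the resolution is not that the discrepancy ``avoids'' $\gamma$: the discrepancy coming from the indicator \emph{is} there, is Lipschitz because $\gamma\in C^k_b$ under $(A_{k,p})$, and is absorbed by Gronwall alongside the $b$ and $h$ terms. With that correction the argument is exactly the expected one, and the moment bound and generator identification are handled appropriately (splitting $\{\eta>1\}$ from $\{\eta\le1\}$ and using $\eta\in L^1\cap L^p(G,q)$ for the $p$-th moment; It\^o's formula for finite-variation pure-jump processes, with $|\phi(y+h(y,z))-\phi(y)|\le\|\phi'\|_\infty\eta(z)$ justifying the interchange, for $\cL$).
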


\subsection{Propagation of smoothness}

We consider the space $\cM(\rr)$ of finite (signed) measures on $\rr$,
and we abusively write $||f||_{L^1(\rr)}:=||f||_{TV}=\int_{\rr} |f|(dy)$ 
for $f \in \cM(\rr)$. We denote by $C^k_b(\rr)$ (resp. $C^k_c(\rr)$)
the set of $C^k$-functions with all their derivatives bounded 
(resp. compactly supported). 
We introduce, for $k \geq 1$, the space $\wbk$ of measures $f\in \cM(\rr)$
such that for all $l \in \{1,...,k\}$, 
there exists $g_l \in \cM(\rr)$ such that
for all $\phi \in C^k_c(\rr)$ (and thus for all  $\phi \in C^k_b(\rr)$),
\begin{equation*}
\intrd f(dy) \phi^{(l)}(y) =
(-1)^{l} \intrd g_l(dy)  \phi(y) .
\end{equation*}
If so, we set $f^{(l)} = g_l$. Classically, 
for $f \in \cM(\rr)$, $f \in \wbk$ if and only if
\begin{equation}\label{dfn}
||f||_{\wbk}
:= \sum_{l=0}^k 
\sup \left\{ \intrd f(dy)\phi^{(l)}(y), \;\;
\phi \in C^k_b(\rr),\; ||\phi||_\infty \leq 1 \right\}
\end{equation}
is finite (here $C^k_b$ could be replaced by $C^k_c$, $C^\infty_b$,
or $C^\infty_c$), and in such a case,
\begin{eqnarray*}
||f||_{\wbk} = \sum_{l=0}^k ||f^{(l)} ||_{L^1(\rr)}.
\end{eqnarray*}
Let us finally recall that

$\bullet$ for $f\in C^k(\rr)$,
$f(y)dy$ belongs to $\wbk$ if and only if $\sum_0^k |f^{(l)}| \in L^1(\rr)$;

$\bullet$ if $f \in \wbk$, with $k\geq 2$, then $f(dy)$ has a density
of class $C^{k-2}(\rr)$.

\vip

We now introduce a first non-degeneracy assumption (here $h'(y,z)=
\partial_y h(y,z)$).

\vip

{\bf Assumption $(S)$:}
There exists $c_0>0$ such that for all $z\in G$, all $y \in \rr$, 
$1+h'(y,z)\geq c_0$.

\begin{pro}\label{propa}
Let $p\geq k+1 \geq 2$ be fixed, assume $(I)$, $(A_{k+1,p})$, and $(S)$.
For $t\geq 0$ and a probability measure $f$ on $\rr$, we define
$p(t,f,dy)$ on $\rr$ by $p(t,f,A)= \int_{\rr} f(dx)p(t,x,A)$,
where $p(t,x,dy)$ was defined in Proposition \ref{exist}.

There is $C_k>0$ such that for
all probability measures $f\in\wbk$, all $t\geq 0$,
\begin{equation*}
||p(t,f,.)||_{\wbk} \leq ||f||_{\wbk}e^{C_k t}.
\end{equation*}
\end{pro}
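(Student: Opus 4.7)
I would prove this by deriving a Gronwall-type integral inequality for $\|\mu_t\|_{\wbk}$, where $\mu_t := p(t,f,\cdot)$, by working in duality. The starting point is the weak Kolmogorov equation
\begin{equation*}
\mu_t(\varphi) = \mu_0(\varphi) + \intot \mu_s(\cL\varphi)\,ds, \qquad \varphi\in C^{k+1}_b(\rr),
\end{equation*}
tested against $\varphi = \phi^{(l)}$ for $l \in \{0,\dots,k\}$ and $\phi \in C^{k+1}_b$, combined with the dual formula $\|\mu_t^{(l)}\|_{TV} = \sup\{|\mu_t(\phi^{(l)})|: \phi \in C^{k+1}_b,\ \|\phi\|_\infty \leq 1\}$.

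\textbf{Commutator and Duhamel.} The algebraic heart is the identity
\begin{equation*}
\cL(\phi^{(l)}) = (\cL\phi)^{(l)} - R_l(\phi),
\end{equation*}
in which the top-order $\phi^{(l+1)}$-terms cancel on the right-hand side and $R_l(\phi)$ involves only derivatives of $\phi$ of orders $\leq l$, with coefficients built from $b,\gamma,h$ and their derivatives up to order $l+1$. For $l=1$ a direct computation gives
\begin{equation*}
R_1(\phi)(y) = b'(y)\phi'(y) + \gamma'(y)\intg[\phi(y+h(y,z))-\phi(y)]q(dz) + \gamma(y)\intg h'(y,z)\phi'(y+h(y,z))q(dz),
\end{equation*}
and the higher $R_l$ follow by iteration together with a Fa\`a di Bruno expansion of $\phi^{(l)}(y+h(y,z))$. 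Plugging the commutator into the Kolmogorov equation and using, for $\mu_s\in\wbk$, the integration by parts $\int(\cL\phi)^{(l)}\,d\mu_s = (-1)^l\mu_s^{(l)}(\cL\phi)$, the measure $\nu_t := \mu_t^{(l)}$ itself satisfies a Kolmogorov equation driven by $\cL$ with a signed-measure source $s_r$ defined by $s_r(\phi) := (-1)^{l+1}\mu_r(R_l(\phi))$. Since $\cL$ generates the Feller semigroup $P_t$, its dual $P_t^*$ contracts in total variation, and the Duhamel formula $\nu_t = P_t^*\nu_0 + \intot P_{t-r}^* s_r\,dr$ yields
\begin{equation*}
\|\mu_t^{(l)}\|_{TV} \leq \|\mu_0^{(l)}\|_{TV} + \intot \|s_r\|_{TV}\,dr.
\end{equation*}

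\textbf{Source bound and Gronwall.} The central estimate to prove is
\begin{equation*}
\|s_r\|_{TV} = \sup_{\|\phi\|_\infty\leq 1}|\mu_r(R_l(\phi))| \leq C_k\,\|\mu_r\|_{\wbk}, \qquad l\leq k.
\end{equation*}
The non-jump pieces $a(y)\phi^{(i)}(y)$ with $i\leq l$ and $a \in C^k_b$ reduce, after $i$ distributional integrations by parts, to $|(a\mu_r)^{(i)}(\phi)| \leq \|\phi\|_\infty\|(a\mu_r)^{(i)}\|_{TV}$, controlled by Leibniz. The delicate jump terms, involving $\phi^{(i)}(y+h(y,z))$, I would handle by Taylor expansion (e.g.\ $\phi(y+h)-\phi(y) = \int_0^1 h\,\phi'(y+sh)\,ds$) followed by the change of variable $T_{s,z}(y) := y+sh(y,z)$, whose Jacobian satisfies $1+sh'(y,z) \geq \min(1,c_0)$ thanks to $(S)$ and which is therefore a global $C^k$-diffeomorphism of $\rr$. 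This lets me rewrite $\phi'(T_{s,z}(y)) = [\phi\circ T_{s,z}]'(y)/(1+sh'(y,z))$ and integrate by parts so that the derivative lands on a smooth-times-$\mu_r$ measure; pushforward by $T_{s,z}$ preserves total variation, and $q(dz)$-integrability is ensured by $\eta\in L^1(q)$. For higher $l$, the Fa\`a di Bruno expansion produces products of up to $k$ derivatives of $h$, whose $q$-integrability comes from $\eta \in L^p(q)$ with $p\geq k+1$. Summing the resulting bound over $l=0,\dots,k$ gives $\|\mu_t\|_{\wbk} \leq \|\mu_0\|_{\wbk} + C_k\intot\|\mu_s\|_{\wbk}\,ds$, and Gronwall closes the proof.

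\textbf{Approximation and main obstacle.} All integrations by parts above presuppose $\mu_s\in\wbk$, which is precisely what we are trying to establish. I would close this circularity by approximating $f$ by $f_\epsilon\in C^\infty_c(\rr)$ with $\|f_\epsilon\|_{\wbk}\leq\|f\|_{\wbk}$ and $f_\epsilon\to f$ weakly (convolution with a mollifier of mass one). Conditioning on the Poisson jump structure and using $(S)$, $X_t^{\xi_\epsilon}$ can be viewed as an average of smooth orientation-preserving pushforwards of $\xi_\epsilon$, so $\mu_t^\epsilon$ has a $C^\infty$-density and thus lies in $\wbk$; the estimates above then apply with a constant $C_k$ independent of $\epsilon$, and the claim for $\mu_t$ follows from the weak convergence $\mu_t^\epsilon\to\mu_t$ (Feller property) together with the lower semi-continuity of $\|\cdot\|_{\wbk}$ under weak convergence. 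The main obstacle, in my view, is the bookkeeping of the commutator-plus-integration-by-parts estimate for $|\mu_s(R_l(\phi))|$ at general order $l$: one must organize the Fa\`a di Bruno decomposition of $\phi^{(l)}(y+h(y,z))$ so that every derivative of $\phi$ can be transferred onto $\mu_s$ via $(S)$, while keeping the $q(dz)$-integrals absolutely convergent via $\eta\in L^1\cap L^p(q)$.
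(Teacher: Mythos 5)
Your a priori estimate is, in substance, the same as the paper's central Lemma \ref{mmp}, carried out on the dual side: where you commute $\cL$ with $d^l/dy^l$ on test functions and undo the jump map via $T_{s,z}(y)=y+sh(y,z)$ (a diffeomorphism by $(S)$), the paper works directly on the density through the adjoint operator $\cL^{i*}$ and the inverse maps $\tau(y,z)$ of $y\mapsto y+h(y,z)$, and both routes lead to the same Gronwall inequality with the same use of $\eta\in L^1\cap L^p(q)$. That part of your plan is sound.

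The genuine gap is in your regularization step. You break the circularity by mollifying the initial law and asserting that, conditionally on the Poisson structure, $X^{\xi_\e}_t$ is ``an average of smooth orientation-preserving pushforwards of $\xi_\e$,'' hence has a smooth density lying in $\wbk$. This is precisely what fails when $\gamma$ is non-constant, and it is the central difficulty the paper is built around: because of the acceptance indicator $\indiq_{\{u\leq\gamma(X^x_{s-})\}}$, whether a given atom of $N$ produces a jump depends on $x$, so conditionally on $N$ the map $x\mapsto X^x_t$ is not a diffeomorphism — it is not even continuous (see the paper's introduction). The conditional law of $X^{\xi_\e}_t$ is then a superposition of pushforwards of $f_\e$ restricted to complicated acceptance sets, and there is no reason for the result to be smooth, let alone to lie in $\wbk$ with a controllable norm. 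Consequently every integration by parts in your Duhamel/commutator scheme (including the very definition of $\nu_t=\mu_t^{(l)}$) still presupposes the conclusion. The paper avoids this by approximating the generator itself: it replaces $\cL$ by $\cL^i$ with finite jump activity ($q(G_i)<\infty$) and a Poissonized drift, solves the resulting Fokker--Planck equation by a Picard iteration that genuinely produces $C^{1,k}_b$ densities (Lemmas \ref{cestparti} and \ref{cestparti2}), proves the $\wbk$ bound uniformly in $i$ (Lemma \ref{mmp}), and only then passes to the limit using weak convergence and lower semicontinuity of $\|\cdot\|_{\wbk}$. Some such truncation of the jump measure (or an equivalent device making the formal computations licit) is an essential missing ingredient in your argument, not a technicality.
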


\vip

Observe that $p(t,f,dy)$ is the law of $X^{X_0}_t$ where
$(X^x_t)_{t\geq 0,x\in\rr}$ solves (\ref{sde}) and 
where $X_0\sim f(dy)$ is independent of $N$.

\vip

Assumption $(S)$ is probably far from optimal, 
but something in this spirit is needed: take
$b\equiv 0$, $\gamma \equiv 1$ and 
$h(y,z)=-y\indiq_{A}(z) +y \eta(z)$ for some $A\subset G$ with $q(A)<\infty$
and some $\eta \in L^1(G,q)$. Of course, $(S)$ is not satisfied, and
one easily checks that there exists
$\tau_A$ exponentially distributed (with parameter $q(A)$) 
such that a.s., for all 
$t\geq \tau_A$, all $x\in \rr$, $X^x_t=0$. This forbids
the propagation of smoothness, since then $p(t,f,dy) \geq (1-e^{-q(A)t})
\delta_0(dy)$,
even if $f$ is smooth.

\subsection{Regularization}

We now give the non-degeneracy condition
that will provide a smooth density to our process.
A generic example of application (in the spirit of \cite{bgj}) 
will be given below.
For two nonnegative measures $\nu,\tilde\nu$ on $G$, we say
that $\nu \leq \tilde\nu$ if for all $A\in\cG$, $\nu(A)\leq \tilde\nu(A)$.
Here $k\in \nn$, $p\in [0,\infty)$ and $\theta>0$.

\vip

{\bf Assumption $(H_{k,p,\theta})$:}
Consider the jump kernel $\mu(y,du)$ associated to our process, defined
by $\mu(y,A)=\gamma(y) \int_G \indiq_A(h(y,z)) q(dz)$ 
(which may be infinite) for all $A\in \cB(\rr)$.

There exists a (measurable) family $(\mu_n(y,du))_{n\geq 1, y \in \rr}$  
of measures on $\rr$ meeting the following points:

(i) for $n\geq 1$, $y\in G$, $0 \leq \mu_n(y,du) \leq \mu(y,du)$ 
and $\mu_n(y,\rr)\geq n $;

(ii) for all $r>0$, $n\geq 1$, $\sup_{|y|\leq r} \mu_n(y,\rr) <\infty $;

(iii) there exists $C>0$ such that for all $n\in\nn$, $y\in\rr$,
\begin{equation*}
\frac{1}{\mu_n(y,\rr)}|| \mu_n(y,.)||_{ \wbk} \leq C(1+|y|^p)  e^{\theta n}.
\end{equation*}

The principle of this assumption is quite natural: it says that at
any position $y$, our process will have sufficiently many jumps with 
a sufficiently smooth density.
Our main result is the following.

\begin{theo}\label{result}
Let $p\geq {k+1} \geq 3$ and $\theta>0$ be fixed. 
Assume $(I)$, $(A_{k+1,p})$, $(S)$ and $(H_{k,p,\theta})$.
Consider the law $p(t,x,dy)$ at time $t\geq 0$ 
of the solution $(X^x_t)_{t\geq 0}$ to (\ref{sde}).

(a) Let $t>\theta /(k-1)$. For any $x\in \rr$,
$p(t,x,dy)$ has a density $y\mapsto p(t,x,y)$ 
of class $C^n_b(\rr)$ as soon as $0 \leq n< kt/(\theta+t)-1$.

(b) In particular, if $(H_{k,p,\theta})$ holds for all $\theta>0$, then
for all $t>0$, all $x\in\rr$, 
$y \mapsto p(t,x,y)$ is of class $C^{k-2}_b(\rr)$.
\end{theo}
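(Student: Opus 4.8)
The plan is to combine the two propositions already available by a careful decomposition of the trajectory according to the ``well-chosen'' jumps of Assumption $(H_{k,p,\theta})$. First I would fix $n\geq 1$ and, using the sub-probability kernel $\mu_n(y,du)\leq \mu(y,du)$, realize the first jump time $\tau_n$ at which the process makes a ``good'' jump: heuristically $\tau_n$ is exponential-like with rate $\mu_n(X^x_{s-},\rr)\geq n$, and on $\{t\geq\tau_n\}$ the post-jump position $X^x_{\tau_n}$ has, conditionally, a density whose $\wbk$-norm is controlled by $(iii)$, i.e. essentially of order $(1+|y|^p)e^{\theta n}$. This requires showing that if one waits for a jump governed by $\mu_n(y,\cdot)/\mu_n(y,\rr)$ then the resulting law lies in $\wbk$ with the stated bound; since $\mu_n(y,\rr)\geq n$, the probability that this jump has occurred before time $t$ is at least something like $1-e^{-ct}$ on a large part of the space (modulo the $|y|$-dependence, which is where $(ii)$ and the $L^p$ moment bound from Proposition \ref{exist} enter).

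Next I would use the strong Markov property at $\tau_n$ together with Proposition \ref{propa}: conditionally on $\cF_{\tau_n}$ and on $\{\tau_n\leq t\}$, the law of $X^x_t$ is $p(t-\tau_n, X^x_{\tau_n},\cdot)$, and by Proposition \ref{propa} its $\wbk$-norm is at most $\|\text{law of }X^x_{\tau_n}\|_{\wbk}\,e^{C_k(t-\tau_n)}$. Combining this with the bound on $X^x_{\tau_n}$ from step one gives, for the ``good'' part of the law, a $\wbk$-norm of order $C(1+\dots)e^{\theta n}e^{C_k t}$. The point then is a splitting $p(t,x,dy)=p_{t,n}^{good}(dy)+p_{t,n}^{bad}(dy)$ where the bad part is the contribution of $\{t<\tau_n\}$, a sub-probability of total mass at most $P(\tau_n>t)$, which decays in $n$. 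So one has written the law as a measure with controlled $\wbk$-norm (blowing up like $e^{\theta n}$) plus a small-mass remainder (decaying in $n$).

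To upgrade this into genuine $C^n_b$ regularity I would iterate: apply the same decomposition not once but $m$ times over the interval $[0,t]$, using $m$ successive ``good'' jumps at times $\tau_n^{(1)}<\dots<\tau_n^{(m)}$. After $m$ good jumps the smooth part has $\wbk$-norm roughly $C^m e^{m\theta n}e^{C_k t}$ but — crucially — one should track not $\wbk$ but $\bar W^{mk,1}$, since each good jump adds $k$ orders of differentiability (one convolves, in a weak sense, the previous density against a new smooth kernel). The bad part has mass at most $P(\text{fewer than }m\text{ good jumps before }t)$, which for $\mu_n\geq n$ is of order $\sum_{j<m} e^{-nt}(nt)^j/j!$, i.e. extremely small once $nt\gg m$. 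Then one optimizes: to get $C^n_b$ with $n<kt/(\theta+t)-1$ one wants $mk$ derivatives with $mk-2\geq n$, while keeping $m\theta n_{\text{jump}}$ — here I am overloading $n$; in the write-up I would rename the number of good jumps $m$ and keep the subset index, say, $N$ — so one needs $m\theta N - C_k t$ to stay finite while $P(\le m\text{ good jumps before }t)$ still beats the $e^{m\theta N}$ blow-up, which happens precisely when $mN\theta<$ (something like) $NtM$... the arithmetic forces the threshold $t>\theta/(k-1)$ in part (a) and, letting $\theta\to0$, yields part (b).

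The main obstacle I expect is not the probabilistic bookkeeping but the measure-theoretic heart of step one: proving that a single jump distributed according to the normalized kernel $\mu_n(y,\cdot)/\mu_n(y,\rr)$ really does produce a law in $\wbk$ with the claimed norm, \emph{and} that this regularity survives being integrated against the (only H\"older, not smooth) law of the pre-jump position $X^x_{s-}$ and the random jump time. Because $x\mapsto X^x_t$ is not even continuous, one cannot differentiate in the starting point; the regularity must come entirely from the $du$-variable of the jump kernel, and one must check that the convolution-type structure ``law of $X_{s-}$'' $\star$ ``$\mu_n(X_{s-},\cdot)$'' does not destroy it — this is exactly the analytic (rather than Malliavin) argument the abstract advertises, and it is presumably carried out in Section \ref{propasmooth}. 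A secondary technical nuisance is the polynomial weight $(1+|y|^p)$ in $(iii)$: one must make sure the $L^p$-moment bound of Proposition \ref{exist}, with $p\geq k+1$, is enough to absorb it when integrating, which is why the hypothesis $p\geq k+1$ appears.
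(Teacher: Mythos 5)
Your first two steps are aligned with the paper: using the auxiliary Poisson mark to isolate a first ``good'' jump time $\tau_n$ with $P[\tau_n>t]\leq e^{-nt}$, invoking $(H_{k,p,\theta})(iii)$ to show that the conditional law $g_n$ of $X^x_{\tau_n}$ satisfies $\|g_n\|_{\wbk}\leq C(1+|X^x_{\tau_n-}|^p)e^{\theta n}$, and then propagating this via the strong Markov property and Proposition~\ref{propa} on $\{t\geq\tau_n\}$, with the $L^p$-moment bound from Proposition~\ref{exist} absorbing the polynomial weight. So far so good, and the decomposition into a ``good'' $\wbk$-measure of norm $\lesssim e^{\theta n}$ plus a ``bad'' remainder of total variation $\leq e^{-nt}$ is exactly the paper's (\ref{ippaf})--(\ref{gw2}).

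The gap is in how you close. You propose to iterate: apply the decomposition $m$ times, tracking $\bar W^{mk,1}$ and arguing that each good jump ``adds $k$ orders of differentiability''. This does not work here. That heuristic relies on a genuine convolution structure: for $f,g\in\wbk$ one has $f*g\in\bar W^{2k,1}$ because $\partial^{2k}(f*g)=(\partial^k f)*(\partial^k g)$. But the post-jump law is not a convolution: it is $E[\mu_n(X_{\tau-},\,dy-X_{\tau-})/\mu_n(X_{\tau-},\rr)]$, and since $\mu_n(y,\cdot)$ depends on the pre-jump position $y$, you cannot integrate by parts to move derivatives from the kernel onto the (already smooth) law of $X_{\tau-}$. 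A second good jump applied to a $\wbk$-law again produces a $\wbk$-law with a (much worse) constant, not a $\bar W^{2k,1}$-law. The paper in fact emphasizes that its non-degeneracy condition is \emph{not} worsened by using only one jump --- precisely because iterating does not buy more derivatives. Your arithmetic at the end of step three is accordingly not recoverable; you acknowledge it is hand-waved, but the issue is not bookkeeping, it is that the regularity simply does not accumulate across jumps.

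What actually closes the argument --- and what you are missing --- is a Fourier-transform argument with the subset index $n$ chosen \emph{as a function of the frequency} $\xi$. Taking $\phi=\psi^{(k)}(\xi,\cdot)$ with $\psi(\xi,y)=e^{i\xi y}$ in (\ref{ippaf}) and integrating by parts $k$ times against the good part gives
$$|\xi|^k\,|\hat p_{t,x}(\xi)|\;\leq\;|\xi|^k e^{-nt}\;+\;C_{t,k}(1+C_t)\,e^{\theta n},$$
valid for every $n\geq 1$. Choosing $n=\lfloor \tfrac{k}{\theta+t}\log|\xi|\rfloor$ balances the two terms and yields $|\hat p_{t,x}(\xi)|\leq 1\wedge A_t|\xi|^{-kt/(\theta+t)}$. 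Then $|\xi|^{n'}|\hat p_{t,x}(\xi)|\in L^1(d\xi)$ whenever $n'<kt/(\theta+t)-1$, which gives $C^{n'}_b$ regularity of the density and forces the threshold $t>\theta/(k-1)$ in part (a); letting $\theta\downarrow0$ gives part (b). This single-jump, frequency-by-frequency optimization is the heart of the proof; without it, the decomposition you derived only tells you that the law is $C^{k-2}$ ``up to a small remainder'', which is not a smoothness statement, since the small remainder could be singular.
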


\subsection{Another assumption}\label{toctoc}

It might seem strange to state our regularity assumptions
with the help of $\gamma,h,q$, and to our nondegeneracy conditions
with the help of the jump kernel $\mu$. However, it seems to us
to be the best way to give understandable assumptions.

\vip

Let us give some conditions on $\gamma$, $h$, $q$, 
in the spirit of \cite{bgj}, which imply $(H_{k,p,\theta})$. 

\vip

{\bf Assumption $(B_{k,p,\theta})$:}
$G=\rr$, and 
for all $y\in\rr$, $\gamma(y)>0$ and there exists 
$I(y)=(a(y),\infty)$ (or $(-\infty, a(y))$) with $a(y)\in \rr$,
with $y \mapsto a(y)$ measurable,
such that $q(dz) \geq \indiq_{I(y)}(z)dz$ and such that the following 
conditions are fulfilled:

(a) for all $y \in \rr$, $z \mapsto h(y,z)$ is of class $C^{k+1}$ on 
$I(y)$. The derivatives $h^{(l)}_z$ (w.r.t. $z$)
for $l=1,...,k+1$ are uniformly bounded on $\{(y,z);\;
y \in \rr, z\in I(y)\}$;

(b) for all $y\in\rr$, all $z\in I(y)$, $h'_z(y,z) \ne 0$, and 
with $I_n(y)=[a(y),a(y)+n/\gamma(y)]$ (or $[a(y)-n/\gamma(y),a(y)] $),
\begin{equation}\label{albr}
\frac{\gamma(y)}{n}\int_{I_n(y)}
|h'_z(y,z)|^{-2k} dz \leq C(1+|y|^p) e^{\theta n}.
\end{equation}
 
\begin{lem} \label{tract}
$(B_{k,p,\theta})$ and $(A_{1,1})$ imply $(H_{k,p,\theta})$.
\end{lem}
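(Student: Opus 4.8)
The goal is to construct, from the data $\gamma,h,q$ satisfying $(B_{k,p,\theta})$, a family $(\mu_n(y,du))_{n\geq 1}$ verifying $(H_{k,p,\theta})$(i)--(iii). The natural candidate is to restrict the jump kernel $\mu(y,du)$ to those jumps produced by marks $z$ lying in the window $I_n(y)$, i.e. to set
\begin{equation*}
\mu_n(y,A) := \gamma(y)\int_{I_n(y)} \indiq_A\bigl(h(y,z)\bigr)\,dz ,
\end{equation*}
which is legitimate since $q(dz)\geq \indiq_{I(y)}(z)\,dz$ and $I_n(y)\subset I(y)$, so indeed $\mu_n(y,du)\leq \mu(y,du)$. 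The mass is $\mu_n(y,\rr)=\gamma(y)\cdot|I_n(y)| = \gamma(y)\cdot n/\gamma(y) = n$, which gives (i) (the inequality $\mu_n(y,\rr)\geq n$ holds with equality) and, being equal to $n$ regardless of $y$, also gives (ii) trivially. So all the content is in verifying the Sobolev bound (iii).

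For (iii) I would compute $\mu_n(y,.)$ as an image measure: since $h'_z(y,z)\neq 0$ on $I(y)$ by $(B_{k,p,\theta})$(b), the map $z\mapsto h(y,z)$ is a $C^{k+1}$ diffeomorphism from $I_n(y)$ onto its image $J_n(y):=h(y,I_n(y))$, an interval. Writing $\psi_y$ for the inverse, the change of variables $u=h(y,z)$ gives that $\mu_n(y,du)$ has, on $J_n(y)$, the density
\begin{equation*}
g_{n,y}(u) = \gamma(y)\,\bigl|\psi_y'(u)\bigr| = \gamma(y)\,\bigl|h'_z(y,\psi_y(u))\bigr|^{-1}.
\end{equation*}
Then I estimate $\|\mu_n(y,.)\|_{\wbk} = \sum_{l=0}^{k}\|g_{n,y}^{(l)}\|_{L^1(\rr)}$ — but being careful that $g_{n,y}$ is supported on the bounded interval $J_n(y)$ and may be discontinuous at the two endpoints, so that the distributional derivatives of $g_{n,y}\indiq_{J_n(y)}$ carry boundary terms (Dirac masses at the endpoints of $J_n(y)$). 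To control these one differentiates $u\mapsto h'_z(y,\psi_y(u))^{-1}$ up to order $k$ using the chain rule and the bounds on $h^{(l)}_z$, $l\le k+1$, from $(B_{k,p,\theta})$(a): each derivative produces a quotient whose numerator is a polynomial in the $h^{(j)}_z$ (hence bounded) and whose denominator is a power of $h'_z$, with the worst power being $|h'_z|^{-(2k-1)}$ or so after $k$ differentiations and accounting for the $|\psi_y'| = |h'_z|^{-1}$ Jacobian in the $L^1(du)$ integral. Converting the $du$-integral back to a $dz$-integral over $I_n(y)$ via $du = |h'_z|\,dz$, the dominant term is bounded by $C\,\gamma(y)\int_{I_n(y)} |h'_z(y,z)|^{-2k}\,dz$, which by $(B_{k,p,\theta})$(b), equation~(\ref{albr}), is at most $C\,n\,(1+|y|^p)e^{\theta n}$. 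Dividing by $\mu_n(y,\rr)=n$ yields exactly (iii). The endpoint Dirac terms are of lower order (they involve $|h'_z|^{-1}$ evaluated at the two endpoints, times a bounded factor), and are absorbed into the same bound; alternatively one can note that since $k\geq 1$ the exponent $2k$ already dominates.

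The main obstacle is the bookkeeping in that last step: the derivatives of a reciprocal composed with an inverse function generate, by Faà di Bruno, a sum of many terms, and one must check that (a) every one of them has a denominator that is a power of $h'_z$ of exponent at most $2k$ once weighted by the Jacobian for the $L^1$ norm, and (b) the boundary contributions at $\partial J_n(y)$ are genuinely negligible relative to~(\ref{albr}). I would handle (a) by an induction on the order of differentiation, tracking the maximal power of $|h'_z|^{-1}$ appearing, and (b) by observing that $|h'_z|$ is bounded above (since $h'_z$ is continuous and... actually only bounded below is hypothesized, so instead) by bounding the endpoint terms by $\sup_{z\in I_n(y)}|h'_z(y,z)|^{-(2k-1)}\cdot|h'_z| \le$ (a constant times) the average appearing in~(\ref{albr}) — or more safely, folding a harmless extra factor into the constant $C$ and the exponential. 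Apart from this, the argument is a direct change-of-variables computation; no probabilistic input beyond the definition of $\mu$ is needed, since this lemma is purely about the analytic structure of the jump kernel.
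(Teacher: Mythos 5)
There is a genuine gap in your construction: the hard indicator cutoff $\mu_n(y,A):=\gamma(y)\int_{I_n(y)}\indiq_A(h(y,z))\,dz$ produces a density $g_{n,y}$ which is compactly supported on the interval $J_n(y)$ and is bounded away from zero near the two endpoints, hence has \emph{jump discontinuities} there. Consequently, the first distributional derivative of $\mu_n(y,.)$ carries two Dirac masses at $\partial J_n(y)$ --- so far, still a finite measure --- but the second distributional derivative then carries \emph{derivatives of Dirac masses} $\delta'$, which are \emph{not} finite measures. By the definition of $\wbk$ used in the paper (each $g_l$ must lie in $\cM(\rr)$), this means $\mu_n(y,.)\notin\wbk$ for $k\geq 2$, so $\|\mu_n(y,.)\|_{\wbk}=+\infty$ and point (iii) of $(H_{k,p,\theta})$ is violated outright. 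Since Theorem \ref{result} needs $k+1\geq 3$, i.e.\ $k\geq 2$, this is not a corner case. Your discussion of boundary contributions treats them as quantitatively small, but the problem is qualitative: for $k\geq 2$ the boundary contributions do not belong to the space at all. They cannot be ``absorbed into the constant.''

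The paper avoids this by replacing the indicator with a \emph{smooth} cutoff: it takes $\phi_n\in C^\infty(\rr;[0,1])$ equal to $1$ on $[2,n+2]$, vanishing outside $(1,n+3)$, with $\sup_n\|\phi_n^{(l)}\|_\infty\leq C_l$, and sets $q_n(y,dz)=\phi_n(\gamma(y)z)\,dz$ (after normalizing $a(y)=0$). Because $\phi_n$ vanishes to infinite order at the edges, $\mu_n(y,\cdot)$ has a genuinely $C^k$ compactly supported density and no boundary terms arise; the price is only that $\mu_n(y,\rr)\in[n,n+2]$ instead of equalling $n$, which is harmless for (i) and (ii). The remainder of your argument --- change of variables $u=h(y,z)$, Fa\`a di Bruno bookkeeping leading to $|h'_z|^{-2k}$ after reweighting by the Jacobian, then invoking (\ref{albr}) --- is the right idea and essentially matches the paper once the cutoff is smoothed; the extra derivatives that now fall on $\phi_n(\gamma(y)\,\xi(y,u))$ are controlled exactly by the uniform bounds $\|\phi_n^{(l)}\|_\infty\leq C_l$ and cost only a bounded factor plus a shift from $n$ to $n+3$ in the integration range, which is absorbed as $e^{3\theta}$.
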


This lemma is proved in the Appendix.
Let us give some examples for (\ref{albr}).

\vip

{\bf Examples:} Assume
that $|h'_z(y,z)| \geq \e (1+|y|)^{-\alpha} \zeta(z)$, for all $y\in\rr$,
all $z\in I(y)$, 
for some $\alpha\geq 0$, $\e>0$.

\vip

$\bullet$ If $\zeta(z)= (1+|z|)^{-\delta}$, for some
$\delta\geq 0$, and $\gamma(y) \geq c (1+|y|)^{-\beta}$ 
for some $c>0$, $\beta \geq 0$,
then (\ref{albr}) holds for all $k\geq 1$, all 
$\theta>0$ and all $p \geq 2k (\alpha + \beta\delta)$.

\vip

$\bullet$ If $\zeta(z)= e^{-d |z|^\delta}$, for some $d>0$,
$\delta \in (0,1)$, and if $\gamma(y) \geq c [ \log (2+|y|)]^{-\beta}$,
with $c>0$, $\beta \in [0,(1-\delta)/\delta)$, 
then (\ref{albr}) holds for all $k\geq 1$, all $\theta>0$, 
all $p>2 k \alpha$.

\vip

$\bullet$ If $\zeta(z)= e^{-d |z|}$, for some $d>0$,
if $\gamma(y)\geq c>0$,
then (\ref{albr}) holds  for all $k \geq 1$, all $\theta\geq 2kd/c$ and 
all $p\geq 2 k \alpha$.

\vip

$\bullet$ With our assumption that $\gamma$ is bounded, 
(\ref{albr}) does never hold if $\zeta(z)= e^{-d |z|^\delta}$
for some $d>0$, $\delta>1$.

\vip

Observe on these examples that there is a balance between the 
{\it rate} of jump $\gamma$
and the {\it regularization power} of jumps (given,
in some sense, by lowerbounds of $|h'_z|$).  The more the power
of regularization is small, the more the rate of jump has to be bounded
from below.
This is quite natural and satisfying.

\subsection{Comments}\label{tictic}

Let us mention that when $\gamma$ is constant,
the result of \cite{bgj} (in dimension $1$), is essentially the following.
Roughly, they also assume something like $q(dz) \geq \indiq_{(a,\infty)}(z)dz$
(they actually consider the case where $q(dz)\geq \indiq_O(z) dz$ for
some infinite open subset $O$ of $\rr$).

They assume more integrability on the coefficients 
(something like $(A_{k,p})$ for all $p>1$).
They assume $(S)$, and much more joint
regularity (in $y,z$)  of $h$ (see Assumption $(A-r)$ page 9 in \cite{bgj}),
the uniform boundedness of $\partial_{z^\alpha} \partial_{y^\beta} h$
as soon as $\alpha\geq 1$.

Their non-degeneracy condition (see Assumption $(SB-(\zeta,\theta))$ 
page 14
in \cite{bgj}) is of the form
$|h'_z(y,z)|^2 \geq  \e (1+|x|)^{-\delta}\zeta(z)$, for some $\delta \geq 0$,
some $\e>0$, 
and some {\it broad} function $\zeta$ (see Definition 2-20 
and example 2-35 pages 13 and 17 in \cite{bgj}). This
notion is probably not exactly comparable to (\ref{albr}). 
Roughly, 

$\bullet$ when $\zeta(z)= e^{-\alpha |z|^\delta}$ with $\delta>1$,
their result does not apply (as ours);

$\bullet$ when  $\zeta(z)= e^{-\alpha |z|^\delta}$ with $\delta<1$, 
or when  $\zeta(z)= (1+|z|)^{-\beta}$ with $\beta>0$, their
result applies for all times $t>0$ (as ours);

$\bullet$ when $\zeta(z)= e^{-\alpha |z|}$, their result applies
for sufficiently large times (as ours).

\vip

As a conclusion, we have slightly less technical assumptions. 
About the nondegeneracy assumption, it seems that
the condition in \cite{bgj} and ours are very similar (when $\gamma \equiv 1$).
Let us insist on the fact that this is quite surprising: one could
think that since we use only the regularization of {\it one} jump,
our nondegeneracy condition should be much stronger than that of \cite{bgj}.

\vip

We could probably state an assumption as
$(B_{k,p,\theta})$ for a general lowerbound of the form
$q(dz) \geq \indiq_O(z)\varphi(z)dz$, for some
open subset $O$ of $\rr$ and some $C^\infty$ function $\varphi: O \mapsto \rr$,
but this would be very technical.

\vip

Finally, it seems highly probable that one may assume,
instead of $(S)$, that $0< 1/(1+h'(x,z)) \leq \alpha(z) \in L^1 \cap L^r(G,q)$
(with $r$ large enough); and that the assumptions $b$, $\gamma$ bounded
and $|h(x,z)| \leq \eta(z)$ (in $(A_{k,p})$) could 
be replaced by
$|b(x)| \leq C(1+|x|)$ and $\gamma(x) |h(x,z)|\leq (1+|x|)\eta(z)$,
with $\eta \in L^1 \cap L^p(G,q)$.
However, the paper is technical enough.

%
%
%
%
%

\vip

We prove Theorem \ref{result} in Section \ref{mainproof}
and Proposition \ref{propa} in Section \ref{propasmooth}.

\section{Smoothness of the density}\label{mainproof}

In this section, we assume that Proposition \ref{propa} holds,
and we give the proof of our main result.
We refer to the introduction for the main ideas of the proof.

\vip

\begin{proof} {\bf of Theorem \ref{result}.} 

We consider here $x \in \rr$, the associated process $(X^x_t)_{t\geq 0}$.
We assume $(I)$, $(S)$, $(A_{k+1,p})$, and $(H_{k,p,\theta})$ 
for some $p\geq k+1 \geq 3$, some $\theta>0$.
Due to Proposition \ref{exist},
\begin{equation}\label{mp}
\forall \; t>0, \quad C_t := E\left[\sup_{[0,t]} |X^x_s|^p\right]<\infty.
\end{equation}
Recall $(H_{k,p,\theta})$, and denote by $f_n(y,u)$ the density (bounded
by $1$) of $\mu_n(y,du)$ with respect to $\mu(y,du)$. Then 
for $q_n(y,dz):=d_n(y,z)q(dz)$ with
$d_n(y,z):=f_n(y,h(y,z))$ (which is bounded by $1$), one easily checks
that for
all $A\in \cB(\rr)$,
$\mu_n(y,A)=\gamma(y) \int_G \indiq_A(h(y,z))q_n(y,dz)$.
As a consequence, still using $(H_{k,p,\theta})$, 

(i) $0\leq q_n(y,dz) \leq q(dz)$, and $\gamma(y) q_n(y,G)=\mu_n(y,\rr) 
\geq n$;

(ii) for all $r>0$, $n\in\nn$, $\sup_{|y|\leq r} \gamma(y) q_n(y,G)<\infty$.

We now divide the proof into four parts.

\vip

{\bf Step 1.} We first introduce some well-chosen instants of jump
that will provide a density to our process. To this end, 
we write $N=\sum_{i\geq 1} \delta_{(t_i,u_i,z_i)}$, we consider 
a family of i.i.d. random variables $(v_i)_{i\geq 1}$ uniformly distributed
on $[0,1]$, independent of $N$. 
We introduce the Poisson measure 
$M=\sum_{i\geq 1} \delta_{(t_i,u_i,z_i,v_i)}$ on $[0,\infty)\times[0,\infty)
\times G \times [0,1]$ with intensity measure $ds du q(dz)dv$.
Then we observe that $N(ds,du,dz)=M(ds,du,dz,[0,1])$.
Let $\cH_t=\sigma\{M(A),\; A\in \cB([0,t])\otimes
\cB([0,\infty))\otimes \cG \otimes 
\cB([0,1])\}$.

Next, we observe, using point (ii) above and (\ref{mp}), that a.s.,
for all $t\geq 0$,
\begin{eqnarray*}
&\sup_{[0,t]} \int_0^\infty \int_G \int_0^1
\indiq_{\{u\leq \gamma(X^x_\sm), v \leq d_n(X^x_\sm,z)\}} du q(dz) dv \\
&= \sup_{[0,t]} \gamma(X^x_\sm)q_n(X^x_\sm,G)<\infty.
\end{eqnarray*}
We thus may consider, for each $n\geq 1$, the a.s. positive
$(\cH_t)_{t\geq 0}$-stopping time
\begin{equation*}
\tau_n=\inf \left\{ t \geq 0; \int_0^t \int_0^\infty \int_G \int_0^1
\indiq_{\{u\leq \gamma(X^x_\sm), v \leq d_n(X^x_\sm,z)\}} M(ds,du,dz,dv)>0
 \right\},
\end{equation*}
and the associated {\it mark} $(U_n,Z_n,V_n)$ of $M$.
Then one easily checks that

(a) for $t\geq 0$, $P[\tau_n \geq t] \leq e^{-n t}$, since 
due to point (i), a.s., for all $s\geq 0$,
\begin{eqnarray*}
\int_0^\infty \int_G \int_0^1
\indiq_{\{u\leq \gamma(X^x_\sm), v \leq d_n(X^x_\sm,z)\}} du q(dz) dv
&=&\gamma(X^x_\sm) \int_G d_n(X^x_\sm)q(dz) \ala
&=&\gamma(X^x_\sm)q_n(X^x_\sm,G)\geq n;
\end{eqnarray*}

(b) $U_n \leq \gamma(X^x_{\tau_n-})$ a.s. by construction;

(c) conditionnally to $\cH_{\tau_n-}$, $Z_n \sim q_n(X^x_{\tau_n-},dz)/q_n(X^x_{\tau_n-},G)$. 
Indeed, the triple $(U_n,Z_n,V_n)$ classically follows, conditionnally to $\cH_{\tau_n-}$, the distribution
\begin{equation*}
\frac{1}{\gamma(X^x_{\tau_n-})q_n(X^x_{\tau_n-},G)}
\indiq_{\{u\leq \gamma(X^x_{\tau_n-}), v \leq d_n(X^x_{\tau_n-},z)\}} du q(dz) dv,
\end{equation*}
and it then suffices to integrate over
$u\in [0,\infty)$ and $v \in [0,1]$ and to use that $d_n(y,z)q(dz)=q_n(y,dz)$.

\vip

{\bf Step 2.} By construction and due to Step 1-(b),
\begin{equation*}
X^x_{\tau_n} = X^x_{\tau_n-} + h( X^x_{\tau_n-},Z_n)
\indiq_{\{U_n\leq \gamma(X^x_{\tau_n-})\}}=X^x_{\tau_n-} + h( X^x_{\tau_n-},Z_n).
\end{equation*}
Hence conditionnally to $\cH_{\tau_n-}$, the law of 
$X^x_{\tau_n}$ is $g_n(\omega,dy) :=
\mu_n(X^x_{\tau_n-},dy-X^x_{\tau_n-})/\mu_n(X^x_{\tau_n-},\rr)$. Indeed,
for any  bounded measurable function $\phi:\rr \mapsto \rr$,
using Step 1-(c) and that $\mu_n(y,A)=\gamma(y)\int_G \indiq_A(h(y,z))
q_n(y,dz)$,
\begin{eqnarray*}
E\left[ \phi(X^x_{\tau_n}) \vert \cH_{\tau_n-}\right]=
\int_G \phi[ X^x_{\tau_n-} + h( X^x_{\tau_n-},z)]
\frac{q_n(X^x_{\tau_n-},dz)}{q_n(X^x_{\tau_n-},G)}
\hskip1.5cm\ala
= \int_\rr \phi( X^x_{\tau_n-} + y) \frac{\mu_n(X^x_{\tau_n-} ,dy)}
{\mu_n(X^x_{\tau_n-},\rr)} 
= \int_\rr \phi(y) g_n(dy) .
\end{eqnarray*}
Due to assumption $(H_{k,p,\theta})$, we know that for some constant
$C$, a.s.,
\begin{equation}\label{gw}
||g_n||_{\wbk}=\frac{1}{\mu_n(X^x_{\tau_n-},\rr)}||\mu_n(X^x_{\tau_n-},.)||_{\wbk} 
\leq C(1+|X^x_{\tau_n-}|^p) e^{\theta n}.
\end{equation}

{\bf Step 3.} We now use the strong Markov property. For $t\geq 0$
and $n \geq 1$, for $\phi: \rr \mapsto \rr$, with the
notation of Proposition \ref{propa}, since 
$\{t\geq \tau_n\} \in \cH_{\tau_n-}$,
\begin{eqnarray}\label{ippaf}
E[\phi(X^x_t)]=E[\phi(X^x_t) \indiq_{\{t<\tau_n\}}]
+ E \left[\indiq_{\{t\geq \tau_n\}} \intrd \phi(y)p(t-\tau_n,g_n,dy)\right].
\end{eqnarray}
But from Proposition \ref{propa} and (\ref{gw}), there exists a
constant $C_{t,k}$ such that a.s.
\begin{eqnarray}\label{gw2}
\indiq_{\{t\geq \tau_n\}} ||p(t-\tau_n,g_n,.)||_{\wbk} &\leq& 
C_{t,k}\indiq_{\{t\geq \tau_n\}} \sup_{[0,t]}(1+|X^x_{s}|^p) 
e^{\theta n}.
\end{eqnarray}

{\bf Step 4.} Consider finally the application $\psi(\xi,y)=e^{i \xi y}$.
Then the Fourier transform of 
the law $p(t,x,dy)$ of $X^x_t$ is given by 
$\hat p_{t,x}(\xi):=E[\psi(\xi,X^x_t)]$.
We apply (\ref{ippaf}) with the choice
$\phi(y)= \psi^{(k)}(\xi,y)= (i\xi)^k\psi(\xi,y)$.
We get, for $n\geq 1$, $\xi \in \rr$,
\begin{eqnarray}\label{ets}
|\xi|^k |\hat p_{t,x}(\xi)| \leq |\xi|^k P[\tau_n>t]
+ E \left[\indiq_{\{t\geq \tau_n\}} \left|
\intrd \psi^{(k)}(\xi,y)p(t-\tau_n,g_n,dy)\right|\right].
\end{eqnarray}
But on $\{t \geq \tau_n\}$, an integration by parts and
then (\ref{gw2}) leads us to
\begin{eqnarray*}
\left|\intrd \psi^{(k)}(\xi,y)p(t-\tau_n,g_n,dy)\right|
=\left|\intrd \psi(\xi,y)  p^{(k)}(t-\tau_n,g_n,dy)\right|\ala
\leq || \psi(\xi,.) ||_\infty ||p(t-\tau_n,g_n,.)||_{\wbk}
\leq C_{t,k} e^{\theta n}\sup_{[0,t]}(1+|X^x_{s}|^p) .
\end{eqnarray*}
Hence (\ref{ets}) becomes, using Step 1-(a) and
(\ref{mp}),
\begin{equation*}
|\xi|^k |\hat p_{t,x}(\xi)| \leq |\xi|^k e^{-nt} + C_{t,k}(1+C_t) 
e^{\theta n}.
\end{equation*}
Choosing for $n$ the integer part of $\frac{k}{\theta+t} \log |\xi|$,
we obtain, for some constant $A_t$,
\begin{equation*}
|\xi|^k |\hat p_{t,x}(\xi)| \leq (e^t + C_{t,k}(1+C_t)) 
|\xi|^{k \theta/(\theta+t)}=: A_t |\xi|^{k \theta/(\theta+t)}.
\end{equation*}
Since on the other hand  $|\hat p_{t,x}(\xi)|$ is clearly bounded
by $1$, we deduce that
\begin{equation}\label{ets4}
|\hat p_{t,x}(\xi)| \leq 1 \land A_t |\xi|^{- k t/(\theta+t)}.
\end{equation}
Let finally $n\geq 0$ such that $n< \frac{k t}{\theta+t}-1$, which is possible
if $t> \frac{\theta }{k-1}$. Then (\ref{ets4}) ensures us that
$|\xi|^n |\hat p_{t,x}(\xi)|$ belongs to $L^1(\rr,d\xi)$, which classically
implies that $p(t,x,dy)$ has a density of class $C^n_b(\rr)$.
\end{proof}

\section{Propagation of smoothness}\label{propasmooth}

It remains to prove Proposition \ref{propa}. It is very technical,
but the principle is quite simple: we study the 
Fokker-Planck integro-partial-differential equation 
associated with our process,
and show that if the initial condition is smooth, so is the solution
for all times, in the sense of $\wbk$ spaces.

\vip

In the whole section, $K$ is a constant whose value may change from line to line,
and which depends only on $k$ and on the bounds of the coefficients assumed
in assumptions $(A_{k+1,p})$ and $(S)$.

\vip

For functions $f(y):\rr\mapsto\rr$, 
$g(t,y):[0,\infty)\times\rr\mapsto\rr$, $h(y,z):\rr\times G\mapsto\rr$,
we will always denote by $f^{(l)}$,  $g^{(l)}$, and $h^{(l)}$
the $l$-th derivative of $f$, $g$, $h$ with respect to the variable $y$.

\vip

A map $(t,y) \mapsto f(t,y)$ is of 
class $C^{1,k}_b([0,T] \times \rr)$ if 
the derivatives $f^{(l)}(t,y)$ 
and $\partial_t f^{(l)}(t,y)$ exist, are continuous and bounded,  
for all $l\in \{0,...,k\}$ .
\vip

We consider for $i\geq 1$ the approximation
$\cL^i$ of $\cL$, recall (\ref{gi}), defined for all bounded
and measurable $\phi:\rr\mapsto \rr$ by
\begin{equation*}
\cL^i\phi(y)= i
\left[\phi\left(y+ \frac{b(y)}{i}\right) - \phi(y) \right]
+ \gamma(y) \int_{G_i} q(dz) \left[\phi(y+h[y,z])-\phi(y) \right].
\end{equation*}
Here,  $(G_i)_{i\geq 1}$ is an increasing sequence of subsets of $G$ such that 
$\cup_{i\geq 1} G_i = G$ and such that for each $i\geq 1$, 
$q(G_i)<\infty$.

\begin{lem}\label{cestparti}
Assume $(I)$ and $(A_{1,1})$.

(i) For any $i\geq 1$, any probability measure $f_i(dy)$ on $\rr$, there exists
a unique family of (possibly signed) bounded measures 
$(f_i(t,dy))_{t\geq 0}$ on $\rr$ such that for all $T>0$,
$\sup_{[0,T]} \int_{\rr} |f_i(t)|(dy)<\infty$, and
for all bounded measurable $\phi: \rr\mapsto \rr$,
\begin{equation}\label{eql}
\intrd \phi(y)f_i(t,dy)=\intrd \phi(y)f_i(dy)
+ \intot ds \intrd \cL^i \phi(y) f_i(s,dy).
\end{equation}
Furthermore, $f_i(t)$ is a probability measure for all $t\geq 0$. 

(ii) Assume now that $f_i(dy)$ goes weakly  to some probability measure 
$f(dy)$ as $i$ tends to infinity. Then for all $t\geq 0$, 
$f_i(t,dy)$ tends weakly to $p(t,f,dy)$ as $i$ tends to infinity,
where we use the notation of Proposition \ref{propa}.
\end{lem}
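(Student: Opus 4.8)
\emph{Proof plan.} The plan is to treat (i) by a bounded-operator argument and (ii) by a compactness argument. For (i), the point is that, since $q(G_i)<\infty$ and $b,\gamma$ are bounded, $\cL^i$ is a \emph{bounded} linear operator on the bounded measurable functions, $\|\cL^i\phi\|_\infty\le C_i\|\phi\|_\infty$ with $C_i:=2i+2\|\gamma\|_\infty q(G_i)$; writing $\cL^i$ out as a sum of (scaled) push-forwards and multiplications by bounded functions shows that its adjoint $(\cL^i)^*$ is bounded on $\cM(\rr)$, with $\|(\cL^i)^*m\|_{TV}\le C_i\|m\|_{TV}$. I would then simply set $f_i(t):=e^{t(\cL^i)^*}f_i=\sum_{n\ge0}\frac{t^n}{n!}((\cL^i)^n)^*f_i$, a series converging in total variation locally uniformly in $t$; pairing with a bounded measurable $\phi$ gives at once (\ref{eql}) and $\sup_{[0,T]}\|f_i(t)\|_{TV}\le e^{C_iT}$. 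Uniqueness follows from the weak equation itself: if $h(t):=g(t)-f_i(t)$ satisfies $\langle h(t),\phi\rangle=\int_0^t\langle h(s),\cL^i\phi\rangle\,ds$ for all bounded measurable $\phi$, iterating this identity (replacing $\phi$ successively by $\cL^i\phi,(\cL^i)^2\phi,\dots$), using $\|(\cL^i)^n\phi\|_\infty\le C_i^n\|\phi\|_\infty$ and $\sup_{[0,T]}\|h(s)\|_{TV}<\infty$, bounds $|\langle h(t),\phi\rangle|$ by $\sup_{[0,T]}\|h(s)\|_{TV}\,(C_it)^n\|\phi\|_\infty/n!$ for every $n$, whence $h\equiv0$. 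Finally $f_i(t)$ is a probability measure: $\cL^i\indiq=0$ gives $\langle f_i(t),\indiq\rangle=\langle f_i,\indiq\rangle=1$, and writing $\cL^i=R_i-c_i\,\mathrm{Id}$ with $c_i:=i+\|\gamma\|_\infty q(G_i)$ and $R_i$ a \emph{positive} bounded operator shows that $e^{t\cL^i}=e^{-c_it}e^{tR_i}$ is positivity preserving on functions, hence $f_i(t)=(e^{t\cL^i})^*f_i\ge0$.

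For (ii) I would argue by compactness. The same estimates, now for Lipschitz test functions, give $\|\cL^i\phi\|_\infty\le K\,\mathrm{Lip}(\phi)$ with $K$ depending only on $\|b\|_\infty,\|\gamma\|_\infty,\|\eta\|_{L^1(q)}$ and \emph{not} on $i$; plugged into (\ref{eql}), this yields the uniform equicontinuity estimate $d(f_i(t),f_i(s))\le K|t-s|$, where $d$ metrizes weak convergence (bounded-Lipschitz distance). Moreover $\{f_i(t)\}_i$ is tight for each $t$: testing (\ref{eql}) against smooth cut-offs $\chi_R(y)=\chi(y/R)$ with $\indiq_{\{|y|\ge2\}}\le\chi\le\indiq_{\{|y|\ge1\}}$, one has $\|\cL^i\chi_R\|_\infty\le K/R$ uniformly in $i$, so $f_i(t)(\{|y|\ge2R\})\le\langle f_i,\chi_R\rangle+Kt/R$, and the first term is uniformly small because $\{f_i\}$, being weakly convergent, is tight — note that no a priori uniform moment bound on the $f_i$ is needed, which is why the cut-off rather than a plain Lyapunov function is used. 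By Arzel\`a--Ascoli, $\{t\mapsto f_i(t)\}_i$ is then relatively compact in $C([0,T],{\cal P}(\rr))$ for each $T$, where ${\cal P}(\rr)$ denotes the probability measures on $\rr$ with the weak topology.

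It remains to identify the limit points. Along a subsequence, $f_{i_k}(\cdot)\to\mu(\cdot)$ uniformly on $[0,T]$. For $\phi\in C^2_c(\rr)$ one checks that $\cL^{i_k}\phi\to\cL\phi$ \emph{uniformly on $\rr$}: the drift part $i[\phi(y+b(y)/i)-\phi(y)]\to b(y)\phi'(y)$ with error $O(\|b\|_\infty^2\|\phi''\|_\infty/i)$ by Taylor, and the jump part converges since $G_i\uparrow G$ and $|\phi(y+h(y,z))-\phi(y)|\le\|\phi'\|_\infty\eta(z)\in L^1(q)$. Passing to the limit in (\ref{eql}) — using this uniform convergence for the operator, and weak convergence of $f_{i_k}(s)$ together with dominated convergence in $s$ for $\langle f_{i_k}(s),\cL\phi\rangle$ (recall $\cL\phi\in C_b(\rr)$) — shows that $\mu(\cdot)\in C([0,\infty),{\cal P}(\rr))$ solves $\langle\mu(t),\phi\rangle=\langle f,\phi\rangle+\int_0^t\langle\mu(s),\cL\phi\rangle\,ds$ for all $\phi\in C^2_c(\rr)$ and $t\ge0$. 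Since $p(t,f,dy)$, the law of $X^{X_0}_t$ with $X_0\sim f$, solves the same equation, one concludes $\mu(t)=p(t,f,dy)$ by uniqueness for this Fokker--Planck equation — equivalently for the martingale problem associated with $\cL$ — which follows from the well-posedness of (\ref{sde}) stated in Proposition \ref{exist}; since the limit does not depend on the subsequence, the whole sequence converges, giving (ii). I expect this identification step to be the main obstacle: the uniqueness statement tying the limiting equation to the process of Proposition \ref{exist} must be made rigorous (via a superposition/duality argument, or else by replacing the whole of (ii) with a direct coupling of the pure-jump chain generated by $\cL^i$ and the solution of (\ref{sde}) driven by $N$ restricted to $G_i$, estimated by Gronwall).
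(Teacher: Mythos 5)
Your part (i) is correct but follows a genuinely different, more operator-theoretic route than the paper. The paper proves uniqueness by the same Gronwall-type estimate as you (via $\|\cL^i\phi\|_\infty\le C_i\|\phi\|_\infty$), but for existence it introduces an auxiliary Poisson measure $M^i(ds)$ on $[0,\infty)$ with intensity $i\,ds$ and builds an explicit jump process $X^i_t$ (driven by $M^i$ for the discretized drift and by $N$ restricted to $G_i$ for the jumps), starting from $X^i_0\sim f_i$, then sets $f_i(t,dy):=\cL(X^i_t)$. This gives existence, boundedness, and positivity in one stroke. Your exponential-series construction $f_i(t)=e^{t(\cL^i)^*}f_i$, with positivity obtained by writing $\cL^i=R_i-c_i\,\mathrm{Id}$ with $R_i$ positive and $c_i=i+\|\gamma\|_\infty q(G_i)$, is an entirely valid analytic substitute (note $\cL^i\phi+c_i\phi$ is positive precisely because $\|\gamma\|_\infty\ge\gamma(y)$, so the constant $c_i$ works even though the natural killing rate $i+\gamma(y)q(G_i)$ is $y$-dependent). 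The trade-off is that the paper's probabilistic construction produces exactly the coupling object needed for part (ii), whereas your analytic construction does not.

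For part (ii) there is a genuine gap, which you yourself flag. Your compactness argument (bounded-Lipschitz equicontinuity in $t$, tightness via cut-offs, Arzel\`a--Ascoli, then passing to the limit in the weak formulation) correctly shows that any subsequential limit $\mu(\cdot)$ solves the Fokker--Planck equation $\langle\mu(t),\phi\rangle=\langle f,\phi\rangle+\int_0^t\langle\mu(s),\cL\phi\rangle\,ds$ for $\phi\in C^2_c(\rr)$. But concluding $\mu(t)=p(t,f,dy)$ requires \emph{uniqueness for this linear Fokker--Planck equation} (equivalently, uniqueness for the associated martingale problem among measure-valued flows), which is strictly stronger than the pathwise existence/uniqueness for the SDE (\ref{sde}) recorded in Proposition \ref{exist}; passing from pathwise uniqueness to Fokker--Planck uniqueness needs a superposition-type argument that the paper neither states nor uses. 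The paper sidesteps this entirely: it uses the Skorokhod representation theorem to realize $X^i_0\sim f_i$ with $X^i_0\to X_0$ a.s., couples $X^i$ and $X^{X_0}$ on the same space through the same driving noise $N$, and shows by a Gronwall estimate (using $(A_{1,1})$, as in \cite[Step 1 page 653]{fg}) that $\sup_{[0,t]}|X^i_s-X^{X_0}_s|\to0$ in probability, which gives the weak convergence directly. This is precisely the ``direct coupling of the pure-jump chain generated by $\cL^i$ and the solution of (\ref{sde}) driven by $N$ restricted to $G_i$, estimated by Gronwall'' that you mention at the end as a fallback --- and it is the route you should take, since it avoids any appeal to Fokker--Planck uniqueness.
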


\begin{proof}
Let us first prove the uniqueness part.
We observe that
for $\phi$ bounded and measurable, $\cL^i\phi$ is also measurable and
satisfies $|| \cL^i\phi||_\infty \leq C_i ||\phi||_\infty$,
where $C_i := 2i+2||\gamma||_\infty q(G_i)$. Hence for two solutions
$f_i(t,dy)$ and $\tf_i(t,dy)$ to (\ref{eql}), 
an immediate computation leads us to
\begin{equation*}
||f_i(t)-\tf_i(t)||_{TV} \leq C_i \intot ds ||f_i(s)-\tf_i(s)||_{TV},
\end{equation*}
since the total variation norm
satisfies $||\nu||_{TV}:= \sup_{||\phi||_\infty\leq 1} 
|\int_{\rr} \phi(y) \nu(dy)|$. The uniqueness of the solution 
to (\ref{eql}) follows
from the Gronwall Lemma.

Let us consider $X_0\sim f$ independent of $N$, and 
$(X^{x}_t)_{t \geq 0,x\in\rr}$ 
the solution to (\ref{sde}), associated to the
Poisson measure $N$. Recall that $p(t,f,dy)=\cL(X^{X_0}_t)$.

We introduce another Poisson measure
$M^i(ds)$ on $[0,\infty)$ with intensity measure
$i ds$, independent of $N$, and $X^i_0\sim f_i$, independent of $(M^i,N)$. 
Let $(X^{i}_t)_{t\geq 0}$ be the (clearly unique) solution to
\begin{equation*}
X^{i}_t =X_0^i +\intot \frac{b(X^{i}_\sm)}{i}M^i(ds)
+\intot \!\! \int_0^\infty\!\! \int_{G_i}\!\!\!\!
h(X_\sm^{i},z)\indiq_{\{u\leq \gamma(X_\sm^{i})\}}N(ds,du,dz).
\end{equation*}
Then one immediately checks that $f_i(t,dy)=\cL(X^i_t)$ solves (\ref{eql}).
This shows the existence of a solution to (\ref{eql}), and that
this solution consists of a family of probability measures.
Finally, we use the Skorokhod representation Theorem:
we build $X^i_0\sim f_i$ in such a way that $X^i_0$ tends a.s. to $X_0$.
Then one easily proves that $\sup_{[0,t]} |X^i_s-X^{X_0}_s|$ tends 
to $0$ in probability, for all $t\geq 0$,
using repeatedly $(A_{1,1})$. We refer to \cite[Step 1 page 653]{fg} 
for a similar proof. This of course implies that for all $t\geq 0$,
$f_i(t,dy)=\cL(X^i_t)$ tends weakly to $p(t,f,dy)=\cL(X^{X_0}_t)$.
\end{proof}

\vip

We now introduce some inverse functions in order to write
(\ref{eql}) in a strong form.

\begin{lem}\label{inv}
Assume $(S)$ and $(A_{k+1,p})$ for some 
$p\geq k+1 \geq 2$.

(i) For each fixed $z \in G$, the map $y \mapsto y+h(y,z)$
is an increasing $C^{k+1}$-diffeomorphism from $\rr$ into itself.
We thus may introduce its inverse function $\tau(y,z):
\rr \times G \mapsto \rr$ defined by $\tau(y,z)+h(\tau(y,z),z)=y$.
For each $z\in G$, $y\mapsto \tau(y,z)$ is of class $C^{k+1}(\rr)$. 
There exist $\alpha$ and $K>0$ such that
\begin{eqnarray}
&|\tau(y,z)-y| + |\tau'(y,z)-1|+  
\frac{|\tau'(y,z)-1|}{\tau'(y,z)}\leq \alpha(z) \in L^1(G,q), \label{cuh1} \\
&0<\tau'(y,z)\leq K. \label{cuh2}
\end{eqnarray}
For all $l \in \{0,...,k\}$, there exist some functions 
$\alpha_{l,r}:\rr\times G \mapsto \rr$ with 
\begin{eqnarray}\label{cuh3}
\left(1+\frac{1}{\tau'(y,z)}\right) \sum_{r=0}^{l} |\alpha_{l,r}(y,z)|
\leq \alpha(z) \in L^1(G,q)
\end{eqnarray}
such that for all $\phi\in C^l(\rr)$, 
\begin{eqnarray}
\left[\phi(\tau(y,z))\tau'(y,z)\right]^{(l)}
= \phi^{(l)}(\tau(y,z)) + \sum_{r=0}^{l} 
\alpha_{l,r}(y,z) \phi^{(r)}(\tau(y,z)). \label{cuh4}
\end{eqnarray}
(ii) For all $i \geq i_0:= 2|| b'||_\infty$, the map  
$y \mapsto y+b(y)/i$
is an increasing $C^{k+1}$-diffeomorphism from $\rr$ into itself.
Let its inverse $\tau_i: \rr \mapsto \rr$
be defined by $\tau_i(y)+b(\tau_i(y))/i=y$.
Then $\tau_i \in C^{k+1}(\rr)$.
There exists $c>0,K>0$ such that
\begin{equation}\label{cui1}
|\tau_i(y)-y| \leq K/i ,\quad |\tau'_i(y)-1| \leq K/i,
\quad c<\tau'_i(y)\leq K . 
\end{equation}
For all $l \in \{0,...,k\}$, there exist
$\beta^i_{l,r}:\rr \mapsto \rr$ with 
\begin{equation}\label{cui2}
\sum_{r=0}^{l} i |\beta_{l,r}^i(y)| \leq K
\end{equation} 
such that for all $\phi\in C^l(\rr)$, 
\begin{eqnarray}
\left[\phi(\tau_i(y)) \tau'_i(y)\right]^{(l)}
= \phi^{(l)}(\tau_i(y)) 
+ \sum_{r=0}^{l} \beta^i_{l,r}(y) \phi^{(r)}(\tau_i(y)). \label{cui3}
\end{eqnarray}
(iii) For all $i\geq i_0$, all bounded measurable $\phi:\rr\mapsto \rr$ 
and all $g \in L^1(\rr)$, 
$\int_\rr g(y) \cL^i \phi(y) dy  = \int_\rr \phi(y) \cL^{i*} g(y)dy$,
where
\begin{eqnarray}\label{listar}
\cL^{i*} g(y)&=& i \big[g(\tau_i(y))\tau'_i(y) - g(y) \big] \ala
&&+ \int_{G_i} q(dz)
 \big[\gamma(\tau(y,z)) g(\tau(y,z)) \tau'(y,z) 
- \gamma(y) g(y)   \big].
\end{eqnarray}
\end{lem}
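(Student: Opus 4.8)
The plan is to prove (i) in detail, obtain (ii) by exactly the same scheme with $1/i$ in the role of $\eta(z)$, and deduce (iii) by changes of variables. For (i): assumption $(S)$ says the forward map $y\mapsto y+h(y,z)$ has derivative $1+h'(y,z)\geq c_0>0$, hence it is strictly increasing, and since $|h(\cdot,z)|\leq\eta(z)$ (from $(A_{k+1,p})$) it tends to $\pm\infty$ as $y\to\pm\infty$, so it is a bijection of $\rr$; it is $C^{k+1}$ with non-vanishing derivative, so by the inverse function theorem it is a $C^{k+1}$-diffeomorphism and $\tau(\cdot,z)$ is $C^{k+1}$. Differentiating $\tau(y,z)+h(\tau(y,z),z)=y$ yields $\tau(y,z)-y=-h(\tau(y,z),z)$ and $\tau'(y,z)=(1+h'(\tau(y,z),z))^{-1}$, whence $(\ref{cuh1})$ and $(\ref{cuh2})$ with $K=c_0^{-1}$, using $|h|,|h'|\leq\eta(z)$ and $1+h'\geq c_0$; in particular $1/\tau'(y,z)=1+h'(\tau(y,z),z)\leq1+\eta(z)$, a bound used below.

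For $(\ref{cuh4})$ and $(\ref{cuh3})$ I expand $[\phi(\tau(y,z))\tau'(y,z)]^{(l)}$ via the Leibniz rule and the Fa\`a di Bruno formula. The unique term carrying $\phi^{(l)}(\tau(y,z))$ has coefficient $(\tau'(y,z))^{l+1}$, which I split as $1+[(\tau'(y,z))^{l+1}-1]$, putting the bracket into $\alpha_{l,l}$; every remaining $\alpha_{l,r}$ is a universal polynomial in $\tau'(y,z),\dots,\tau^{(l+1-r)}(y,z)$, namely a sum of products of $r+1$ derivatives of $\tau$ of orders summing to $l+1$, so for $r<l$ each monomial contains a factor $\tau^{(m)}$ with $m\geq2$. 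Repeatedly differentiating $\tau'(1+h'(\tau,z))=1$ writes $\tau^{(m)}$, $m\geq2$, as a polynomial in $\tau'$ and $h'(\tau,z),\dots,h^{(m)}(\tau,z)$ whose every monomial carries at least one $h^{(j)}(\tau,z)$ with $j\geq2$; bounding $\tau'$ by $K$ and each $h^{(j)}$ by $\eta(z)$ gives $|\tau^{(m)}(y,z)|\leq K\sum_{j=1}^{m-1}\eta(z)^j$, hence $\sum_{r=0}^l|\alpha_{l,r}(y,z)|\leq K\sum_{j=1}^{k}\eta(z)^j$ (the $r=l$ term being $O(\eta(z))$). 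Multiplying by $1+1/\tau'(y,z)\leq2+\eta(z)$ only raises the highest power to $k+1$, and $\sum_{j=1}^{k+1}\eta(z)^j\in L^1(G,q)$ since $\eta\in L^1\cap L^p$ with $p\geq k+1$ implies $\eta\in L^j(G,q)$ for $1\leq j\leq p$ by interpolation; this proves $(\ref{cuh3})$ with $\alpha(z):=K\sum_{j=1}^{k+1}\eta(z)^j$ (enlarging $K$ so this also dominates $(\ref{cuh1})$).

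Part (ii) is identical in form with $y+b(y)/i$ replacing $y+h(y,z)$ and $1/i$ replacing $\eta(z)$: for $i\geq i_0=2\|b'\|_\infty$ one has $1+b'(y)/i\in[\tfrac12,\tfrac32]$, so (as $b$ is bounded) $y\mapsto y+b(y)/i$ is an increasing $C^{k+1}$-diffeomorphism of $\rr$ with $C^{k+1}$ inverse $\tau_i$; $\tau_i(y)+b(\tau_i(y))/i=y$ gives $(\ref{cui1})$, implicit differentiation gives $|\tau_i^{(m)}(y)|\leq K/i$ for $m\geq2$ and $|(\tau_i'(y))^{l+1}-1|\leq K/i$, and the same expansion yields $(\ref{cui3})$ with $|\beta^i_{l,r}(y)|\leq K/i$, i.e. $(\ref{cui2})$.

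For (iii), fix $i\geq i_0$, $\phi$ bounded measurable, $g\in L^1(\rr)$; in $\int_\rr g(y)\cL^i\phi(y)\,dy$ substitute $y=\tau_i(w)$, $dy=\tau_i'(w)\,dw$ in the term $\int_\rr g(y)\,i\,\phi(y+b(y)/i)\,dy$, and, for each fixed $z$, $y=\tau(w,z)$, $dy=\tau'(w,z)\,dw$ in $\int_\rr g(y)\gamma(y)\phi(y+h(y,z))\,dy$ (Fubini being valid since $q(G_i)<\infty$, $\gamma,\tau',\tau_i',\phi$ are bounded, and $\int_\rr|g(\tau(w,z))|\tau'(w,z)\,dw=\|g\|_{L^1(\rr)}$); regrouping the transformed and untransformed pieces gives precisely $\int_\rr\phi(y)\cL^{i*}g(y)\,dy$ with $\cL^{i*}$ as in $(\ref{listar})$. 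The one genuinely delicate point is $(\ref{cuh3})$: arranging the Fa\`a di Bruno expansion so that every $\alpha_{l,r}(y,z)$ is visibly $O(\eta(z)+\cdots+\eta(z)^k)$ up to a bounded factor, and then seeing that the weight $1+1/\tau'(y,z)$ costs exactly one more power of $\eta$, so that the exponent $k+1$ matches exactly the integrability hypothesis $p\geq k+1$; the diffeomorphism statements, the implicit-differentiation bounds, and the changes of variables are all routine.
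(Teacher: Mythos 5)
Your proposal is correct and follows essentially the same route as the paper: inverse function theorem plus implicit differentiation for the basic estimates on $\tau,\tau'$, Fa\`a di Bruno and Leibniz to produce the $\alpha_{l,r}$ and $\beta^i_{l,r}$, the bound $|\tau^{(m)}|\leq K(\eta+\eta^{m-1})$ (the paper packages this as its Appendix formula (\ref{dninv}), you re-derive it by differentiating $\tau'(1+h'(\tau,z))=1$), the factor $1+1/\tau'\leq 2+\eta$ contributing the final power $\eta^{k+1}\in L^1$ via $p\geq k+1$, the same $K/i$ analogue for (ii), and change of variables for (iii). The only differences are presentational.
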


\begin{proof} We start with

{\bf Point (i).} The fact that for each $z\in G$, $y+h(y,z)$
is an increasing $C^{k+1}$-diffeomorphism follows immediately
from $(A_{k+1,p})$ and $(S)$. Thus its inverse function 
$y\mapsto \tau(y,z)$ is of class $C^{k+1}$.
Next, $\tau'(y,z)=1/(1+h'(\tau(y,z),z))$, and thus is positive and
bounded by $1/c_0$ due to $(S)$. This shows (\ref{cuh2}).
Of course, $\sup_y |\tau(y,z)-y|=\sup_y |y+h(y,z)-y|\leq\eta(z)\in L^1(G,q)$
due to $(A_{k+1,p})$. Next, 
$|\tau'(y,z)-1|= |h'(\tau(y,z),z)|/(1+h'(\tau(y,z),z)) \leq 
\eta(z)/c_0 \in L^1(G,q)$, due to $(S)$ and $(A_{k+1,p})$. Finally, 
$ |\tau'(y,z)-1|/\tau'(y,z)=|h'(\tau(y,z),z)| \leq 
\eta(z) \in L^1(G,q)$, due to $(A_{k+1,p})$. Thus (\ref{cuh1}) holds.

\vip

We next show that for $l=1,...,k+1$, 
\begin{equation}\label{btl}
|\tau^{(l)}(y,z)|\leq K (\eta(z) +\eta^{l-1}(z)).
\end{equation}
When $l=1$, it suffices to use that $|\tau'(y,z)-1|\leq K\eta(z)$,
which was already proved.
For $l\geq 2$, we use (\ref{dninv}) (with
$f(y)=y+h(y,z)$), the fact that $f'(y)=1+h'(y,z)\geq c_0$ due to $(S)$,
and that for all $n=2,...,k+1$, $f^{(n)}(y)=h^{(n)}(y,z)\leq \eta(z)$
(due to $(A_{k+1,p})$): this yields, 
setting $I_{l,r}:=
\{q\in \nn, \;i_1,...,i_q\in\{2,...,l\};\; i_1+...+i_q=r-1\}$,
\begin{eqnarray*}
|\tau^{(l)}(y,z)|&\leq& K \sum_{r=l+1}^{2l-1} \sum_{I_{l,r}} 
\prod_{j=1}^q |h^{(i_j)}(\tau(y,z),z)| \leq K \sum_{r=l+1}^{2l-1} 
\sum_{I_{l,r}} \eta^q(z) \ala
&\leq& K \sum_{q=1}^{l-1} \eta^q(z) \leq K(\eta(z)+ \eta^{l-1}(z)).
\end{eqnarray*}
\vip

We now consider $\phi\in C^k(\rr)$. Due to (\ref{fdb}),
for $n=1,...,k$,
\begin{equation}\label{rajou}
[\phi(\tau(y,z))]^{(n)}
=[\tau'(y,z)]^n \phi^{(n)} (\tau(y,z)) +
\sum_{r=1}^{n-1} \delta_{n,r}(y,z) \phi^{(r)} (\tau(y,z))
\end{equation}
with $\delta_{n,r}(y,z) = \sum_{J_{n,r}} a^{n}_{i_1,...,i_r} 
\prod_{1}^r \tau^{(i_j)} (y,z)$, where
$J_{n,r}:=\{i_1\geq 1,...,i_r\geq 1, \; i_1+...+i_r=n\}$. 
Using (\ref{btl}), we get, for $r=1,...,n-1$,
\begin{eqnarray}\label{bga}
|\delta_{n,r}(y,z)| &\leq& K \sum_{J_{n,r}} 
\prod_1^r (\eta(z)+\eta^{i_j-1}(z)) 
\leq K \sum_{m=1}^{n-1} \eta^m(z) \ala
&\leq& K (\eta(z) + \eta^{n-1}(z)).
\end{eqnarray}
To obtain the second inequality, we used that since $i_1+...+i_r=n>r$,
there is at least one $j$ with $i_j\geq 2$, and that 
$\sum_{j=1}^r (i_j-1)\lor 1 = \sum_{j=1}^r (i_j-1)
+\sum_{j=1}^r \indiq_{\{i_j=1\}}\leq n-r+r-1=n-1$.

\vip

Applying now the Leibniz formula and then (\ref{rajou}), 
we get, for $l=0,...,k$,
\begin{eqnarray*}
[\phi(\tau)\tau']^{(l)}
 = \tau' [\phi(\tau)]^{(l)} 
+\sum_{n=0}^{l-1} \cnl \tau^{(l+1-n)} [\phi(\tau)]^{(n)} \hskip3cm\ala
= (\tau')^{l+1} \phi^{(l)}(\tau) +
\sum_{r=0}^{l-1} \phi^{(r)}(\tau) \alpha_{l,r}
 = \phi^{(l)}(\tau) +
\sum_{r=0}^{l} \phi^{(r)}(\tau) \alpha_{l,r},
\end{eqnarray*} 
where $\alpha_{l,0}=\tau^{(l+1)}$, $\alpha_{l,l} =  (\tau')^{l+1} - 1$,
and for $r=1,...,l-1$, 
\begin{eqnarray*}
\alpha_{l,r}&=& \crl \tau^{(l+1-r)} (\tau')^{r}
+ \sum_{j=r+1}^{l} \cjl \tau^{(l+1-j)} \delta_{j,r}.
\end{eqnarray*}
It only remains to prove (\ref{cuh3}). First, since $\tau'$ is bounded,
we deduce that $|\alpha_{l,l}(y,z)|\leq K |\tau'(y,z)-1|\leq K \eta(z)$.
Next, using (\ref{btl}), (\ref{bga}) and that $\tau'$ is bounded, 
we get, for $l=1,...,k$, (with the convention $\sum_1^0=0$),
\begin{eqnarray*}
\sum_{r=0}^l |\alpha_{l,r}(y,z)| &\leq& 
K\eta(z) + K(\eta(z)+\eta^l(z))+ K\sum_{r=1}^{l-1} (\eta(z)+\eta^{l-r}(z)) \ala
&& + K\sum_{r=1}^{l-1} \sum_{j=r+1}^l
(\eta(z)+\eta^{l-j}(z))(\eta(z)+\eta^{j-1}(z))\ala
&\leq& K (\eta(z)+\eta^l(z))\leq K (\eta(z)+\eta^k(z))
\end{eqnarray*}
Finally, $(1+1/\tau'(y,z)) = (1+ 1+h'(\tau(y,z),z))
\leq 2+\eta(z)$ by $(A_{k+1,p})$.
We conclude that for $l=1,...,k$,
\begin{equation*}
\left( 1+\frac{1}{\tau'(y,z)}\right) 
\sum_{r=0}^l |\alpha_{l,r}(y,z)| \leq 
K(1+\eta(z))(\eta(z)+\eta^k(z)) =: \alpha (z),
\end{equation*}
and $\alpha \in L^1(G,q)$, 
since by assumption, $\eta \in L^1\cap L^p(G,q)$ with $p\geq k+1\geq 2$.

\vip

{\bf Point (ii).} The proof is the similar (but simpler) to that of Point (i).
We observe that for $i\geq i_0$, 
$(y+b(y)/i)'\geq 1/2$,
so that under $(A_{k+1,p})$, $y+b(y)/i$
is clearly a $C^{k+1}$-diffeomorphism.
Next, (\ref{cui1}) is easily obtained, and we prove as in Point (i)
that
\begin{equation*}
|\tau_i^{(l)}(z)|\leq K(1/i+ (1/i)^{l-1}) \leq K/i ,
\quad l=2,...,k+1,
\end{equation*} 
using that for all $n=2,...,k+1$, $(y+b(y)/i)^{(n)}\leq K/i$
thanks to $(A_{k+1,p})$. Then (\ref{cui2})-(\ref{cui3}) are
obtained as (\ref{cuh3})-(\ref{cuh4}).

\vip

{\bf Point (iii).} Let thus $\phi$ and $g$ as in the statement.
Then
\begin{eqnarray*}
&&\intrd g(y)\cL^i\phi(y)dy = 
i \intrd \phi\left(y+b(y)/i\right) g(y)dy 
- i \intrd \phi\left(y\right) g(y)dy \\
&&\hskip0.5cm +  \int_{G_i} q(dz) \intrd \gamma(y)\phi(y+h[y,z])g(y)dy
-\int_{G_i} q(dz) \intrd \gamma(y)\phi(y)g(y)dy\\
&&\hskip0.5cm =i \intrd \phi\left(y\right) g(\tau_i(y))\tau'_i(y)dy 
- i \intrd \phi\left(y\right) g(y)dy \\
&&\hskip0.5cm +  \int_{G_i} q(dz) \intrd \gamma(\tau(y,z))\phi(y)g(\tau(y,z))
\tau'(y,z)dy \\
&&\hskip0.5cm -\int_{G_i} q(dz) \intrd \gamma(y)\phi(y)g(y)dy
=\intrd \phi(y)\cL^{i*}g(y)dy,
\end{eqnarray*}
where we used the substitution $y \mapsto \tau_i(y)$ 
(resp. $y\mapsto\tau(y,z)$)
in the first (resp. third) integral.
\end{proof}

The following technical lemma shows that 
when starting with a smooth initial condition, the solution of (\ref{eql})
remains smooth for all times (not uniformly in $i$).
This will enable us to handle rigorous computations.

\begin{lem}\label{cestparti2}
Assume $(I)$, $(A_{k+1,p})$ for some $p\geq k+1\geq 2$, and $(S)$.
Let $i \geq i_0$ be fixed.
Consider a probability measure $f_i(dy)$ admitting 
a density $f_i(y)$ of class $C^k_b(\rr)$, and the associated
solution $f_i(t,dy)$ to (\ref{eql}).
Then for all $t\geq 0$, $f_i(t,dy)$ has a density $f_i(t,y)$,
and $(t,y) \mapsto f_i(t,y)$ belongs to $C^{1,k}_b([0,T] \times \rr)$ for all $T\geq 0$.
For all $t\geq 0$,
all $y \in \rr$, all $l=0,...,k$,
\begin{eqnarray}\label{dereql}
\partial_{t} f_i^{(l)}(t,y) &=& \big[\cL^{i*}f_i(t,y) \big]^{(l)} \ala
&=&
i \big[f_i(t,\tau_i(y))\tau'_i(y) - f_i(t,y) \big]^{(l)} \\
&&+ \int_{G_i} q(dz)
 \big[\gamma(\tau(y,z)) f_i(t,\tau(y,z)) \tau'(y,z) 
- \gamma(y) f_i(t,y)   \big]^{(l)}.\nonumber
\end{eqnarray}
\end{lem}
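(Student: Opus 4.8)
The statement asserts that a weak solution $f_i(t,dy)$ of \eqref{eql}, started from a $C^k_b$ density, stays a $C^k_b$-density for all times, with a classical time derivative given by the strong (adjoint) form. Since $i$ is fixed, the operator $\cL^{i*}$ defined in \eqref{listar} is a genuinely \emph{bounded} linear operator on suitable function spaces, so the natural route is to solve the strong equation $\partial_t g = \cL^{i*} g$ by a fixed-point / Picard iteration in a Banach space of smooth functions, and then identify this strong solution with the weak one by the uniqueness part of Lemma \ref{cestparti}.

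First I would fix $T>0$ and work in the Banach space $E_k := C([0,T], \wbk)$ (equivalently, $C([0,T]; C^k_b(\rr))$ for densities with $\sum_0^k|g^{(l)}| \in L^1$), and check that $g \mapsto \cL^{i*}g$ maps $\wbk$ into itself boundedly. This is exactly where Lemma \ref{inv}(i)--(ii) does the work: differentiating $\gamma(\tau(y,z))g(\tau(y,z))\tau'(y,z)$ via the Leibniz rule and then \eqref{cuh4} (applied to $\phi = \gamma(\cdot)\,$-times-stuff, or more carefully by first Leibniz-expanding $\gamma(\tau)g(\tau)$ and then using \eqref{rajou}) produces terms of the form $\phi^{(r)}(\tau(y,z))$ times coefficients controlled by $\alpha(z) \in L^1(G,q)$; integrating in $y$ and using the substitution $y \mapsto \tau(y,z)$ together with $\tau' \le K$ bounds each such term by $K\|g\|_{\wbk}$, and integrating in $z$ against $q$ is finite because the coefficients are dominated by $\alpha(z) \in L^1(q)$. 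The drift part $i[g(\tau_i(y))\tau_i'(y) - g(y)]$ is handled the same way using \eqref{cui1}--\eqref{cui3}, the factor $i$ being harmless since $i$ is fixed. Thus $\|\cL^{i*}g\|_{\wbk} \le K_i \|g\|_{\wbk}$ for some constant $K_i$ depending on $i$.

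With boundedness in hand, the map $\Phi(g)(t) := f_i + \int_0^t \cL^{i*} g(s)\,ds$ is a contraction on $C([0,T_0],\wbk)$ for $T_0$ small (or one uses the standard weighted-norm trick $\sup_t e^{-\lambda t}\|g(t)\|$ to get it on all of $[0,T]$ at once), yielding a unique $g \in C([0,T],\wbk)$ with $g(t) = f_i + \int_0^t \cL^{i*}g(s)\,ds$ and $g(0) = f_i$. Standing back, $g(t,\cdot) \in \wbk$ for each $t$; since $k \ge 2$ we even get (by the last bullet recalled before Assumption $(S)$) that $g(t,\cdot)$ has a $C^{k-2}$ density, but more directly, because $f_i \in C^k_b$ and $\cL^{i*}$ preserves $C^k_b$ (the compositions with the $C^{k+1}$ diffeomorphisms $\tau, \tau_i$ are $C^k_b$ when the coefficients are), the iterates stay in $C^k_b$ and so does the limit; the integral equation then shows $\partial_t g^{(l)}(t,y) = [\cL^{i*}g(t,\cdot)]^{(l)}(y)$ exists, is continuous in $(t,y)$ and bounded on $[0,T]\times\rr$, i.e. $g \in C^{1,k}_b([0,T]\times\rr)$ and \eqref{dereql} holds. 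Finally, testing $g(t,dy) := g(t,y)dy$ against any bounded measurable $\phi$ and integrating by parts via Lemma \ref{inv}(iii), $\int_\rr \phi(y) g(t,y)dy = \int_\rr \phi f_i + \int_0^t \int_\rr \cL^i\phi(y) g(s,y)dy\,ds$, so $g(t,dy)$ solves \eqref{eql}; by the uniqueness statement of Lemma \ref{cestparti}(i), $g(t,dy) = f_i(t,dy)$, which gives the claim.

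\textbf{Main obstacle.} The only real work is the bookkeeping in showing $\cL^{i*}: \wbk \to \wbk$ boundedly — i.e. organizing the repeated Leibniz expansions so that every coefficient that appears after differentiating $\gamma(\tau(y,z))g(\tau(y,z))\tau'(y,z)$ up to order $k$ is genuinely dominated by the single $L^1(q)$ function $\alpha(z)$ of Lemma \ref{inv}, and that the substitution $y\mapsto\tau(y,z)$ is applied correctly to move derivatives off $g$. Everything else (contraction mapping, propagation of $C^k_b$ regularity, identification with the weak solution) is routine once this estimate is recorded. I would lean on Lemma \ref{inv} as much as possible rather than re-deriving these bounds, since that lemma was stated precisely to package them.
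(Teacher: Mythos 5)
Your overall strategy — a Picard/fixed-point iteration for the strong adjoint equation $\partial_t g=\cL^{i*}g$, followed by identification with the weak solution of (\ref{eql}) via the duality of Lemma \ref{inv}(iii) and the uniqueness of Lemma \ref{cestparti}(i) — is exactly the paper's. The paper, too, starts from $f^0:=f_i$, iterates $f^{n+1}(t)=f_i+\int_0^t \cL^{i*}f^n(s)\,ds$, shows the iterates are Cauchy (Step 2), passes to the limit (Step 3), then verifies $L^1$-boundedness and the weak formulation (Step 4).

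There is, however, one real imprecision worth fixing: the Banach space in which you run the contraction. You propose to contract in $C([0,T],\wbk)$, i.e.\ in the $L^1$-Sobolev norm, and then assert that the limit is in $C^k_b$ because the iterates are. That last inference is not automatic: convergence in $\wbk$ gives convergence of $g_n^{(k)}$ only in $L^1$, and a uniformly bounded sequence of continuous functions converging in $L^1$ need not have a continuous limit. Thus you would only recover $g(t,\cdot)\in C^{k-1}_b$ this way (the derivatives of order $\le k-1$ are absolutely continuous, but the top derivative might not be continuous). The paper avoids this by running the contraction estimate (Step 2) in the \emph{sup-norm} $\sum_{l\le k}\|(\delta^{n+1})^{(l)}(t,\cdot)\|_\infty$, for which Lemma \ref{inv} together with $q(G_i)<\infty$ again furnishes the needed bound; convergence in this norm does give $g\in C^{0,k}_b$, after which the integral equation can be time-differentiated. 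The $L^1$ bound $\sup_{[0,T]}\|f_i(t)\|_{L^1}<\infty$ is obtained separately, by a crude Gronwall on $\|f^{n+1}(t)\|_{L^1}$ (Step 4). Your parenthetical remark that $C([0,T],\wbk)$ is ``equivalently'' $C([0,T];C^k_b)$ intersected with $L^1$ is where the slip enters: these norms are not equivalent, and the sup-norm is the one you must contract in. With that substitution your argument aligns with the paper's.
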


\begin{proof}
We will prove, using a Picard iteration,
that (\ref{dereql}) (with $l=0$) admits a solution, which also solves
(\ref{eql}), which is regular, and of which the derivatives solve
(\ref{dereql}). We omit the fixed subscript $i\geq i_0$ in this part
of the proof, and the initial probability measure 
$f(dy)=f(y)dy$ with $f \in C^k(\rr)$ is fixed. 

\vip

{\bf Step 1.} 
Consider the function $f^0(t,y):=f(y)$, and define, for $n\geq 0$,
\begin{eqnarray}\label{picpic}
f^{n+1}(t,y) = f(y)+ \intot \cL^{i*}f^n(s,y) ds.
\end{eqnarray}
Then one easily checks by induction (on $n$), 
using Lemma \ref{inv}, $(A_{k+1,p})$ and the fact that $q(G_i)<\infty$, 
that for all $n\geq 0$,
$f^n(t,y)$ is of class $C^{0,k}_b([0,\infty)\times\rr)$,
and that for all $l\in \{0,...,k\}$,
\begin{eqnarray}\label{picpicl}
&& (f^{n+1})^{(l)}(t,y) =  f^{(l)}(y) + \intot [\cL^{i*}f^n]^{(l)}(s,y) ds.
\end{eqnarray}

{\bf Step 2.} We now show that there exists $C_{k,i}>0$
such that for $n\geq 1$, $t\geq 0$,
\begin{eqnarray*}
\sum_{l=0}^k ||(\delta^{n+1})^{(l)}(t,.) ||_{\infty} 
\leq C_{k,i} \intot ds \sum_{l=0}^k 
|| (\delta^{n})^{(l)}(s,.) ||_{\infty},
\end{eqnarray*}
where $\delta^{n+1}(t,y)=f^{n+1}(t,y)-f^n(t,y)$.
Due to (\ref{picpicl}), for $l=0,...,k$,
\begin{eqnarray*}
(\delta^{n+1})^{(l)}(t,y) = 
 \intot i \big[\delta^{n}(s,\tau_i(y))\tau'_i(y) 
- \delta^{n}(s,y) \big]^{(l)} ds \hskip2cm\ala
+ \intot ds \int_{G_i} q(dz)
\big[\gamma(\tau(y,z)) \delta^n(s,\tau(y,z))\tau'(y,z) 
- \gamma(y) \delta^{n}(s,y)   \big]^{(l)}.
\end{eqnarray*}
We now use (\ref{cui3}) (with $\phi=\delta^n(s,.)$) and (\ref{cuh4}) 
(with $\phi=\gamma\delta^n(s,.)$),
and we easily obtain, since $q(G_i)<\infty$,
for some constant $C_{k,i}$, for all $y\in\rr$,
\begin{eqnarray*} 
|(\delta^{n+1})^{(l)}(t,y)| &\leq& 
C_{k,i} \intot \! ds  \sum_{r=0}^{l} \left(
||(\delta^n)^{(r)}(s) ||_\infty  + ||(\gamma\delta^n)^{(r)}(s) ||_\infty \right)\\
&\leq& 
C_{k,i} \intot \!  ds  \sum_{r=0}^{l}
||(\delta^n)^{(r)}(s) ||_\infty,
\end{eqnarray*}
the last inequality holding since $l\leq k$ and $\gamma \in C^k_b(\rr)$.
Taking now the supremum over $y\in\rr$ and suming for $l=0,...,k$, we get
the desired inequality.

\vip

{\bf Step 3.} We classically 
deduce from Step 2 that the sequence $f^n$ tends to a function
$f(t,y)\in C^{0,k}_b([0,T]\times\rr)$ (for all $T>0$),
and that for $l=0,...,k$,
\begin{equation}\label{inteql}
f^{(l)}(t,y)=f^{(l)}(y) + \intot [\cL^{i*} f]^{(l)}(s,y) ds.
\end{equation}
But one can check, using arguments as in Step 1, that
since $f(t,y)\in C^{0,k}_b([0,T]\times\rr)$, so does
$[\cL^{i*} f](t,y)$. Hence (\ref{inteql}) can be differentiated
with respect to time, we obtain (\ref{dereql}), and thus also
that $f(t,y)\in C^{1,k}_b([0,T]\times\rr)$.

\vip

{\bf Step 4.} It only remains to show that $f(t,y)dy$ is indeed the solution
of (\ref{eql}) defined in Lemma \ref{cestparti}-(i).
First, using (\ref{picpic}) and rough estimates,
we have $||f^{n+1}(t)||_{L^1} \leq ||f||_{L^1} + C_i \int_0^t ds  ||f^{n}(s)||_{L^1}$,
where $C_i=2i + 2||\gamma||_\infty q(G_i)$.
This classically ensures that  $||f(t)||_{L^1} \leq \limsup_n 
||f^{n}(t)||_{L^1}
\leq  ||f||_{L^1}e^{C_i t}$.
Thus $\sup_{[0,T]} \int_\rr |f(t,y)|dy<\infty$ for all $T>0$.

Next, we multiply (\ref{inteql}) (with $l=0$) by $\phi(y)$,
for a bounded measurable $\phi:\rr\mapsto\rr$, we integrate
over $y\in \rr$, and we use the duality proved in 
Lemma \ref{inv}-(iii). This yields (\ref{eql}).
\end{proof}

\vip

The central part of this section consists of the following
result.

\begin{lem}\label{mmp}
Assume $(I)$, $(S)$ and $(A_{k+1,p})$ for some
$p\geq k+1\geq 2$. For $i\geq i_0$, 
let $f_i(dy)\in \wbk$ be a probability measure with a 
density $f_i(y)\in C^k(\rr)$, and consider the unique
solution $f_i(t,dy)$ to (\ref{eql}).
There exists a constant $C_k$ (not depending on $i\geq i_0$)
such that for all $t\geq 0$,
\begin{equation*}
||f_i(t,.)||_{\wbk} \leq ||f_i||_{\wbk} e^{C_k t}.
\end{equation*}
\end{lem}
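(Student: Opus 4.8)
The plan is to exploit the strong form of the Fokker--Planck equation, namely (\ref{dereql}) from Lemma~\ref{cestparti2}, which we may use since $f_i$ has a $C^k_b$ density and $i\geq i_0$ is fixed. Fix $i\geq i_0$ and write $f(t,y)=f_i(t,y)$, dropping the subscript; by Lemma~\ref{cestparti2} we know $(t,y)\mapsto f(t,y)$ lies in $C^{1,k}_b([0,T]\times\rr)$, so all the differentiations below are legitimate. The quantity to control is $\|f(t,.)\|_{\wbk}=\sum_{l=0}^k \|f^{(l)}(t,.)\|_{L^1(\rr)}$. The idea is to differentiate $\|f^{(l)}(t,.)\|_{L^1}=\intrd |f^{(l)}(t,y)|dy$ in time. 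Since $f^{(l)}(t,\cdot)$ is $C^1$ in $t$ with values in $L^1$, one has (in the weak/a.e.\ sense, or after a standard smoothing of $|\cdot|$) $\frac{d}{dt}\|f^{(l)}(t,.)\|_{L^1}\leq \intrd |\partial_t f^{(l)}(t,y)|\,dy = \intrd \big|[\cL^{i*}f(t,\cdot)]^{(l)}(y)\big|\,dy$. So it suffices to show that for each $l\in\{0,\dots,k\}$,
\begin{equation*}
\intrd \big|[\cL^{i*}f(t,\cdot)]^{(l)}(y)\big|\,dy \leq K \sum_{r=0}^k \|f^{(r)}(t,.)\|_{L^1(\rr)},
\end{equation*}
with $K$ depending only on $k$ and the bounds in $(A_{k+1,p})$, $(S)$, and \emph{crucially not on $i$}; summing over $l$ and applying Gronwall then gives the claim.

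The core of the argument is therefore the $i$-uniform bound on $\intrd |[\cL^{i*}f]^{(l)}|$. Recall from (\ref{listar}) that $\cL^{i*}g$ splits into a drift part $i[g(\tau_i(y))\tau_i'(y)-g(y)]$ and a jump part $\int_{G_i}q(dz)\big[\gamma(\tau(y,z))g(\tau(y,z))\tau'(y,z)-\gamma(y)g(y)\big]$. For the jump part, I would differentiate $l$ times using (\ref{cuh4}) of Lemma~\ref{inv} applied to $\phi=\gamma(\cdot,z)^{?}$... more precisely to $\phi=$ (the function $w\mapsto \gamma(w)f^{(\cdot)}$, built up via Leibniz): writing $[\gamma(\tau)f(t,\tau)\tau']^{(l)}$ via Leibniz in terms of $\gamma^{(a)}(\tau)$ and $[f(t,\tau)\tau']^{(b)}$ and then (\ref{cuh4}), one gets $[\gamma(\tau(y,z))f(t,\tau(y,z))\tau'(y,z)]^{(l)} = (\gamma f^{(l)})(\tau(y,z))\cdot$(leading term) $+ \sum_{r=0}^l c_{l,r}(y,z)f^{(r)}(t,\tau(y,z))$ where the coefficients $c_{l,r}$ satisfy, after using (\ref{cuh3}) and the boundedness of $\gamma$ and its derivatives, $(1+1/\tau'(y,z))\sum_r|c_{l,r}(y,z)|\leq K\alpha(z)$ with $\alpha\in L^1(G,q)$. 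The point is that the $l$-th derivative of $\gamma(y)f^{(l)}(t,y)$ that would appear in the ``diagonal'' term is cancelled, in the $L^1$ norm after the substitution $y\mapsto$ (the image point), by the corresponding $-\gamma(y)f^{(l)}(t,y)$ coming from the second half of the jump operator: this is the same telescoping that makes $\intrd \cL^{i*}g=0$, now carried out at the level of the $l$-th derivative. Concretely, $\intrd |[\gamma(\tau(\cdot,z))f(t,\tau(\cdot,z))\tau'(\cdot,z)]^{(l)}(y) - (\gamma f^{(l)})(y)|\,dy$: change variables $y\mapsto y+h(y,z)$ in the part containing $(\gamma f^{(l)})(\tau(y,z))\tau'(y,z)$ to turn it into $\intrd|(\gamma f^{(l)})(y)|dy$ exactly against the subtracted term, leaving only the lower-order terms $\sum_{r\leq l}\intrd|c_{l,r}(y,z)|\,|f^{(r)}(t,\tau(y,z))|\,dy \leq \intrd |f^{(r)}(t,y)|\,(1+1/\tau'(y,z))|c_{l,r}|(\text{at }\tau^{-1})\,dy$... — this is where (\ref{cuh3}) with its factor $(1+1/\tau')$ is tailored precisely so that the change of variables produces a Jacobian $\tau'$ (equivalently a factor $1/(1+h')$ at the new point) that is absorbed — and integrating in $q(dz)$ gives $\leq K(\int\alpha\,dq)\sum_{r=0}^l\|f^{(r)}(t,.)\|_{L^1}$.

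For the drift part $i[f(t,\tau_i(y))\tau_i'(y)-f(t,y)]$, the same mechanism applies with (\ref{cui3}) and (\ref{cui2}) in place of (\ref{cuh4}) and (\ref{cuh3}): after differentiating $l$ times, the diagonal term $i[f^{(l)}(t,\tau_i(y))\tau_i'(y)-f^{(l)}(t,y)]$ integrates in $y$ to something bounded by $i\cdot o(1/i)$... no — more carefully, one writes $i[f^{(l)}(t,\tau_i(y))\tau_i'(y)-f^{(l)}(t,y)] = i\intrd$-telescoping and uses that $|\tau_i(y)-y|\leq K/i$, $|\tau_i'(y)-1|\leq K/i$, so this difference, while multiplied by $i$, has $L^1$ norm bounded by $\|f^{(l+1)}(t,.)\|_{L^1}\cdot K$ when $l<k$; for $l=k$ one must instead keep the term as $i[f^{(k)}(t,\tau_i(y))\tau_i'(y)-f^{(k)}(t,y)]$ and bound its $L^1$ norm directly by the $i$-telescoping/change-of-variables cancellation: $\intrd i|f^{(k)}(t,\tau_i(y))\tau_i'(y)-f^{(k)}(t,y)|dy \le 0$ after the substitution $y\mapsto\tau_i(y)$ in the first term cancels it against the second, leaving only the lower-order terms $i\sum_{r\le k}|\beta^i_{l,r}(y)|\,|f^{(r)}(t,\tau_i(y))|$ which by (\ref{cui2}) contribute $\le K\sum_{r=0}^k\|f^{(r)}(t,.)\|_{L^1}$ uniformly in $i$. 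I expect the $l=k$ case of the drift term (and the corresponding top-order jump term) to be the main obstacle: this is exactly where naive estimates lose a factor $i$, and one must use the exact telescoping/change-of-variables cancellation rather than crude bounds — but Lemma~\ref{inv} was designed to make precisely this work, since the ``leading'' terms in (\ref{cuh4}) and (\ref{cui3}) are honestly $\phi^{(l)}(\tau)$ (with the full Jacobian moved into the identity $[\phi\tau']^{(l)}$) and all genuinely new terms sit in the $L^1(q)$-summable, $i$-summable remainders. Collecting drift and jump contributions yields $\frac{d}{dt}\|f(t,.)\|_{\wbk}\leq C_k\|f(t,.)\|_{\wbk}$ and Gronwall finishes.
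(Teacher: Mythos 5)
Your overall architecture matches the paper's: fix $i$, use Lemma~\ref{cestparti2} to justify differentiation, bound the time derivative of $\sum_l \|f_i^{(l)}(t,.)\|_{L^1}$ $i$-uniformly via Lemma~\ref{inv}, and close with Gronwall. But the first reduction you make — passing to
$\frac{d}{dt}\|f^{(l)}(t,.)\|_{L^1}\leq \intrd |[\cL^{i*}f(t,\cdot)]^{(l)}(y)|\,dy$ — is exactly the step that destroys the cancellation you later need, and the repair you propose for the top order $l=k$ is incorrect. You assert that $\intrd i\big|f^{(k)}(t,\tau_i(y))\tau_i'(y)-f^{(k)}(t,y)\big|\,dy\leq 0$ ``after the substitution $y\mapsto\tau_i(y)$ cancels the first term against the second.'' That is false: the integrand is nonnegative, so the left side is $\geq 0$; the change of variables gives $\intrd\big(f^{(k)}(t,\tau_i(y))\tau_i'(y)-f^{(k)}(t,y)\big)\,dy=0$ \emph{without} the absolute value, but once you put the absolute value around the difference, $\intrd i|u-v|$ is generically of order $i\cdot\|f^{(k)}(t,.)\|_{L^1}$, not bounded uniformly in $i$. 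And you cannot trade a derivative for the small displacement at $l=k$ either, since $f$ is only $C^k$ in $y$. So the proposed bound $\intrd |[\cL^{i*}f]^{(l)}|\leq K\sum_r\|f^{(r)}(t,.)\|_{L^1}$ is simply not true uniformly in $i$ for $l=k$, and the scheme collapses there.

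The missing idea is to keep the sign, not to take an absolute value of $\partial_t f^{(l)}$. The paper writes
$\|f_i^{(l)}(t,.)\|_{L^1}=\|f_i^{(l)}\|_{L^1}+\intot\!\intrd sg\big(f_i^{(l)}(s,y)\big)\,\partial_t f_i^{(l)}(s,y)\,dy\,ds$,
and then pairs the leading term of (\ref{cui3}) against $sg(f_i^{(l)}(t,y))$ to get
$i\big[f_i^{(l)}(t,\tau_i(y))-f_i^{(l)}(t,y)\big]\,sg\big(f_i^{(l)}(t,y)\big)\leq i\big[|f_i^{(l)}(t,\tau_i(y))|-|f_i^{(l)}(t,y)|\big]$,
using $u\,sg(v)\leq|u|$ and $v\,sg(v)=|v|$. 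Now each of the two terms has its own absolute value, the substitution $\tau_i(y)\mapsto y$ can be performed on the first one \emph{separately} (after inserting and removing a factor $\tau_i'(y)$, the discrepancy being $\leq K/i$ by (\ref{cui1})), and the factor $i$ truly cancels. The same mechanism handles the jump part: the diagonal contribution is bounded by $\intgi q(dz)\intrd\big[(\gamma|f^{(l)}|)(t,\tau(y,z))-(\gamma|f^{(l)}|)(t,y)\big]dy$ plus an error controlled by $|\tau'-1|\leq\alpha(z)\in L^1(q)$, and the diagonal part vanishes by changing variables in each term separately. Your ``$\leq 0$'' fixes would work if you had $|u(\tau_i(y))|\tau_i'(y)-|u(y)|$ under the integral, not $|u(\tau_i(y))\tau_i'(y)-u(y)|$; the $sg$ trick is what lets the absolute value land on each piece rather than on the difference, and without it the argument does not close.
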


\begin{proof}
We know from Lemma \ref{cestparti2} that $f_i(t,y)$ is of class
$C^{1,k}_b([0,T] \times \rr)$, and that
(\ref{dereql}) holds for $l=0,...,k$. 

Since for each $l=0,...,k$, each $y\in \rr$,
$t\mapsto f_i^{(l)}(t,y)$ is of class $C^1$, we classically deduce that 
$|f_i^{(l)}(t,y)|= |f_i^{(l)}(y)|+\int_0^t  sg(f_i^{(l)}(s,y)) 
\partial_t f_i^{(l)}(s,y) ds$, where
$sg(u)=\indiq_{(0,\infty)}(u)-\indiq_{(-\infty,0)}(u)$. Using thus (\ref{dereql})
and integrating over $y\in\rr$, we get
\begin{equation}
||f_i^{(l)}(t,.)||_{L^1}= ||f_i^{(l)}||_{L^1} + \intot (A_i^l(s)+B_i^l(s))ds,
\end{equation}
for $l=1,...,k$, where, setting $\gamma f_i(t,y)= \gamma(y) f_i(t,y)$ 
for simplicity,
\begin{eqnarray*}
A_i^l(t)&=&\intrd dy \; i \big[f_i(t,\tau_i(y))\tau'_i(y) 
- f_i(t,y) \big]^{(l)} sg(f_i^{(l)}(t,y))\\
B_i^l(t)&=& \int_{G_i}q(dz) \intrd dy \; i \big[\gamma f_i(t,\tau(y,z))
\tau'(y,z) 
- \gamma f_i(t,y) \big]^{(l)} sg(f_i^{(l)}(t,y)).\nonumber
\end{eqnarray*}
Using (\ref{cui3}) (with $\phi=f_i(t,.)$) and then (\ref{cui2}), we obtain
\begin{eqnarray*}
A_i^l(t)&\leq &\intrd dy \; i \big[ f_i^{(l)}(t,\tau_i(y))
-f_i^{(l)}(t,y)) \big] sg(f_i^{(l)}(t,y))\\
&&+ \intrd dy \; \sum_{r=0}^l i |\beta^i_{l,r}(y)|.|f_i^{(r)}(t,\tau_i(y))|
\ala
&\leq & \intrd dy \; i \big[ |f_i^{(l)}(t,\tau_i(y))|
- |f_i^{(l)}(t,y)| \big] + K \intrd dy \; \sum_{r=0}^l |f_i^{(r)}(t,\tau_i(y))|\\
&=:& A_i^{l,1}(t)+A_i^{l,2}(t).
\end{eqnarray*}
First, 
\begin{eqnarray*}
A_i^{l,1}(t) 
&\leq &i \intrd dy |f_i^{(l)}(t,\tau_i(y))| \tau_i'(y)
- i \intrd dy |f_i^{(l)}(t,y)|\\
&& + \intrd dy |f_i^{(l)}(t,\tau_i(y))| \times i|\tau_i'(y)-1|.
\end{eqnarray*}
Using the substitution $\tau_i(y) \mapsto y$ in the first integral,
we deduce that the first and second integral are equal. Next, due to
(\ref{cui1}), we get
\begin{eqnarray*}
A_i^{l,1}(t) &\leq&  0 +  K \intrd dy |f_i^{(l)}(t,\tau_i(y))|\leq K
||f_i^{(l)}(t,.)||_{L^1}.
\end{eqnarray*}
To obtain the last inequality, we used again the substitution
$\tau_i(y) \mapsto y$ and the fact that $\tau_i'$ is bounded below
(uniformly in $i\geq i_0$, see (\ref{cui1})). 
The same argument shows that
\begin{eqnarray*}
A_i^{l,2}(t) &\leq&  K \sum_{r=0}^l ||f_i^{(r)}(t,.)||_{L^1}.
\end{eqnarray*}
Using now (\ref{cuh4}) with $\phi=\gamma f_i(t,.)$, we get
\begin{eqnarray*}
B_i^l(t)&\leq& \intgi \!\!q(dz)\!\!\intrd dy  \big[ (\gamma
f_i)^{(l)}(t,\tau(y,z)) - (\gamma f_i)^{(l)}(t,y)\big] sg(f_i^{(l)}(t,y))\\
&&+\intgi\!\! q(dz)\!\! \intrd dy 
\sum_{r=0}^l  |\alpha_{l,r}(y,z)|.|(\gamma f_i(t,.))^{(r)}(\tau(y,z))
\end{eqnarray*}
With the help of the Leibniz formula, we obtain
\begin{eqnarray*}
B_i^l(t)
&\leq& \intgi \!\!q(dz)\!\!\intrd dy  \big[ \gamma f_i^{(l)}(t,\tau(y,z)) -
\gamma f_i^{(l)}(t,y)\big] sg(f_i^{(l)}(t,y))\\
&&+ \intgi \!\!q(dz)\!\!\intrd dy  \sum_{r=0}^{l-1} \crl 
\big| \gamma^{(l-r)} f_i^{(r)}(t,\tau(y,z))-\gamma^{(l-r)} 
f_i^{(r)}(t,y) \big|\\
&&+\intgi\!\! q(dz)\!\! \intrd dy \sum_{r=0}^l  
|\alpha_{l,r}(y,z)|.|(\gamma f_i(t,.))^{(r)}(\tau(y,z))| \ala
&=:& B_i^{l,1}(t)+B_i^{l,2}(t)+B_i^{l,3}(t).\hskip6cm
\end{eqnarray*}
First, 
\begin{eqnarray*}
B_i^{l,1}(t)\leq \intgi \!\!q(dz)\!\!\intrd dy  \big[ (\gamma 
|f_i^{(l)}|)(t,\tau(y,z)) . \tau'(y,z) -(\gamma |f_i^{(l)}|) (t,y)|\big]\\
+ \intgi \!\!q(dz)\!\!\intrd dy   (\gamma
|f_i^{(l)}|)(t,\tau(y,z)) \times | \tau'(y,z)-1|.
\end{eqnarray*}
Using the substitution $\tau(y,z) \mapsto y$ is the first part of the first 
integral, we deduce that the first integral equals $0$.
Since $\gamma$ is bounded, we get
\begin{eqnarray*}
B_i^{l,1}(t)&\leq& 0 +
K \intgi \!\!q(dz)\!\!\intrd dy |f_i^{(l)}(t,\tau(y,z))| \times | \tau'(y,z)-1|\\
&\leq&  K \intgi \!\!\alpha(z)q(dz)\!\!\intrd dy |f_i^{(l)}(t,\tau(y,z))| \tau'(y,z)
\end{eqnarray*}
for some $\alpha \in L^1(G,q)$, where we used (\ref{cuh1}).
But using again the subsitution $\tau(y,z) \mapsto y$, we find
\begin{eqnarray*}
B_i^{l,1}(t)\leq K \intgi \alpha(z) q(dz)\!\!\intrd dy 
|f_i^{(l)}(t,y)| \leq K ||f_i^{(l)}(t,.)||_{L^1}.
\end{eqnarray*}
Next, using (\ref{cuh3}), then the substitution 
$\tau(y,z) \mapsto y$ and that $\gamma \in C^k_b(\rr)$, we obtain, 
for some $\alpha \in L^1(G,q)$,
\begin{eqnarray*}
B_i^{l,3}(t)\leq K \sum_{r=0}^l \intgi \!\!q(dz)\!\!\intrd dy  
|(\gamma f_i(t,.))^{(r)}(\tau(y,z))| \tau'(y,z) \alpha(z) \\
\leq  \sum_{r=0}^l \left(\intgi \!\! \alpha(z) q(dz) \right)
||(\gamma f_i(t,.)^{(r)}||_{L^1}
\leq K \sum_{r=0}^l || f_i^{(r)}(t,.)||_{L^1}
\end{eqnarray*}
Finally, due to (\ref{cuh1}), there exists $\alpha \in L^1(G,q)$
such that $\sup_y |\tau(y,z)-y|\leq\alpha(z)$. Hence for any
$\phi \in C^1(\rr)$,
\begin{eqnarray*}
\intgi q(dz) \intrd dy |\phi(\tau(y,z))-\phi(y)| \leq 
\intgi q(dz) \intrd dy  \int_{y-\alpha(z)}^{y+\alpha(z)} du |\phi'(u)|\ala
\leq 2 \intgi \alpha(z)q(dz) ||\phi'||_{L^1} \leq K ||\phi'||_{L^1}.
\end{eqnarray*}
As a consequence, using that $\gamma \in C^{k+1}_b$, we get, since $l\leq k$,
\begin{equation*}
B_i^{l,2}(t) \leq  K \sum_{r=0}^{l-1} 
|| (\gamma^{(l-r)}f_i^{(r)})'(t,.)||_{L^1}
\leq K \sum_{r=0}^{l} || f_i^{(r)}(t,.)||_{L^1}.
\end{equation*}
We finally have proved that for $l=1,...,k$, for all $t\geq 0$,
$$
||f_i^{(l)}(t,.)||_{L^1} \leq ||f_i^{(l)}||_{L^1}+ K \sum_{r=0}^l
\intot ds ||f_i^{(r)}(s,.)||_{L^1}.
$$
Using that for all $t\geq 0$, $f_i(t,.)$ is a probability measure
(so that $||f_i(t,.)||_{L^1}=1$) and
summing over $l=0,...,k$, we immediately conclude that 
$$
||f_i(t,.)||_{\wbk} \leq ||f_i||_{\wbk}+ K \intot ds ||f_i(s,.)||_{\wbk}.
$$
The Gronwall Lemma allows us to conclude the proof.
\end{proof}

We finally conclude the

\vip

\begin{proof} {\bf of Proposition \ref{propa}.}
We thus assume $(I)$, $(A_{k+1,p})$ for some $p\geq k+1 \geq 2$, and $(S)$.
Consider a probability measure $f\in \wbk$, and a sequence of
probability measures $f_i\in \wbk$ with densities $f_i \in C^k_b(\rr)$,
such that $f_i$ goes weakly to $f$, and such that
$\lim_i ||f_i||_{\wbk} = ||f||_{\wbk}$.
Consider the unique solution $f_i(t,y)$ to (\ref{eql}).
Then we deduce from Proposition \ref{mmp} that for $t\geq 0$,
\begin{equation}\label{jab1}
||f_i(t,.)||_{\wbk} \leq ||f_i||_{\wbk} e^{C_k t}.
\end{equation}
On the other hand, Lemma \ref{cestparti} implies that for all $t \geq 0$,
$f_i(t,dy)$ goes weakly to $p(t,f,dy)$ as $i$ tends to infinity.
Thus for any $ \phi \in C^k_b(\rr)$, any $l\in \{0,...,k\}$, any $t\geq 0$,
\begin{equation*}
\int_\rr \phi^{(l)} (y)p(t,f,dy) = \lim_{i\to\infty}
\int_\rr \phi^{(l)}(y) f_i(t,dy).
\end{equation*}
We then immediately deduce from (\ref{jab1}), recalling (\ref{dfn}), that
for any $t\geq 0$,
\begin{equation*}
||p(t,f,.)||_{\wbk} \leq ||f||_{\wbk}e^{C_k t}.
\end{equation*}
The proof is finished.
\end{proof}

\section{Appendix}

We first gather some formulae about derivatives of composed and inverse
functions from $\rr$ into itself. Here $f^{(l)}$ stands for the $l$-th
derivative of $f$.

\vip

Let us recall the Faa di Bruno formula. Let $l\geq 1$ be fixed.
The exist some coefficients $a^{l,r}_{i_1,...,i_r}>0$  such that 
for $\phi:\rr\mapsto\rr$ and $\tau:\rr\mapsto\rr$ of class $C^l(\rr)$,
\begin{eqnarray}\label{fdb}
[\phi(\tau)]^{(l)}= [\tau']^l \phi^{(l)}(\tau)
+ \sum_{r=1}^{l-1} \left(\sum_{i_1+...+i_r=l}
a^{l,r}_{i_1,...,i_r}  \prod_{j=1}^r \tau^{(i_j)} \right) 
\phi^{(r)}(\tau),
\end{eqnarray}
where the sum is taken over $i_1\geq 1$, ..., $i_r\geq 1$
with $i_1+...+i_r=l$.

\vip

We carry on with another formula.
For $l\geq 2$ fixed, there 
exist some coefficients $c^{l,r}_{i_1,..,i_q}\in\rr$ such that
for $f:\rr\mapsto\rr$ a $C^l$-diffeomorphism,
and for $\tau$ its inverse function,
\begin{equation}\label{dninv}
\tau^{(l)}=\sum_{r=l+1}^{2l-1} \frac{1}{(f'(\tau))^r}
\sum_{i_1+...+i_q=r-1} c^{l,r}_{i_1,..,i_q}\prod_{j=1}^q f^{(i_j)}(\tau),
\end{equation}
where the sum is taken over $q\in\nn$, over $i_1,...,i_q\in \{2,...,l\}$
with $i_1+...+i_q=r-1$. This (not optimal) formula can be checked 
by induction on $k\geq 2$.

\vip

We finally give the

\vip

\begin{proof} {\bf of Lemma \ref{tract}.}
In the whole proof, $y\in \rr$ is fixed. We assume for example
that $I(y)=(a(y),\infty)$, and we may suppose without loss
of generality that $a(y)=0$ (replacing 
if necessary $h[y,z]$ by $\bar h(y,z):=h[y,z +a(y)]$).

We introduce a family of $C^\infty$
functions $\phi_n:\rr \mapsto [0,1]$,
such that $\phi_n(z)=0$ for $z\leq 1$ and $z\geq n+3$,
$\phi_n(z)=1$ for $z\in [2,n+2]$, and $\sup_n ||\phi_n^{(l)}||_\infty \leq C_l$
for all $l\in \nn$. Then we set $q_n(y,dz)=\phi_n(\gamma(y).z)dz$,
and we define $\mu_n$ by 
$\mu_n(y,A)=\gamma(y) \int_G \indiq_A(h(y,z))q_n(y,dz)$.

Clearly $0 \leq q_n(y,dz) \leq q(dz)$ so that $\mu_n(y,du)\leq \mu(y,du)$, 
and an immediate
computation leads us to $\mu_n(y,\rr)=\gamma(y)q_n(y,G) \in [n,n+2]$.
Thus points (i) and (ii) of assumption $(H_{k,p,\theta})$ are
fulfilled.

Since $h'_z(y,z)$ does never vanish, $z \mapsto h(y,z)$ is either
increasing or decreasing. We assume for example
that we are in the latter case. We also necessarily have 
$\lim_{z\to \infty} h(y,z)=0$, since 
$h(y,z) \in L^1((0,\infty),dz)$ (due to $(A_{1,1})$). As a conclusion,
$z \mapsto h(y,z)$ is a
decreasing $C^{k+1}$-diffeomorphism from $(0,\infty)$ into $(0,h(y,0))$.

Let $\xi(y,.):(0,h(y,0))\mapsto (0,\infty)$ be its inverse, that is 
$h(y,\xi(y,u))=u$. 
Then by definition of $\mu_n$ and by using the subsitution 
$u=h(y,z)$, we get $\mu_n(y,du)=\mu_n(y,u)du$ with
\begin{equation}\label{fni}
\mu_n(y,u)=\gamma(y) \phi_n(\gamma(y)\xi(y,u))
\xi'_u(y,u) \indiq_{\{u \in (0,h(y,0)) \}}.
\end{equation}
Since the properties of $\phi_n$ ensure us that $\mu_n(y,u)=0$
for 
\begin{equation*}
u \notin (h[y,(n+2)/\gamma(y)],h[y,1/\gamma(y)]),
\end{equation*} 
it suffices to study the regularity of $\mu_n(y,.)$ on $(0,h(y,0))$. 
Since $\xi(y,.)$ is of class $C^{k+1}$ and since $\phi_n$ is
$C^\infty$, we deduce that $\mu_n(y,.)$ is $C^k$ on $(0,h(y,0))$
(and thus on $\rr$).

Using (\ref{dninv}) and that $h^{(l)}_z$ is uniformly bounded 
(for all $l=1,...,k+1$), we easily get, for $l=2,...,k+1$,
\begin{eqnarray*}
| \xi^{(l)}_u (y,u) | \leq   K \sum_{r=l+1}^{2l-1} |h'_z(y,\xi(y,u))|^{-r} 
\end{eqnarray*}
Since $h'_z$ is uniformly bounded,
we get
\begin{eqnarray}\label{bxi}
| \xi^{(l)}_u (y,u) | \leq   K 
|h'_z(y,\xi(y,u))|^{-2l+1},
\end{eqnarray}
and the formula holds for $l=1,...,k+1$ (when $l=1$, it is obvious).

Applying now (\ref{fdb}), using (\ref{bxi}) and 
that $\gamma$ is bounded, we get,
for $l=1,...,k$,
\begin{eqnarray*}
|[\phi_n(\gamma(y)\xi(y,u))]^{(l)}_u |&\leq& K \sum_{r=1}^l
|h'_z(y,\xi(y,u))|^{-2l+r} \phi_n^{(r)}(\gamma(y).\xi(y,u)).
\end{eqnarray*}
We used here that for $i_1\geq 1,...,i_r\geq 1$ with $i_1+...+i_r=l$,
one has the inequality $\prod_{j=1}^r |\xi^{(i_j)}_u(y,u)| \leq K 
|h'_z(y,\xi(y,u))|^{\sum_1^r(-2 i_j+1)}\leq  
K |h'_z(y,\xi(y,u))|^{-2l+r}$.
Hence
\begin{eqnarray}\label{rr}
|[\phi_n(\gamma(y)\xi(y,u))]^{(l)}_u |
&\leq& K |h'_z(y,\xi(y,u))|^{-2l+1} \indiq_{\{\gamma(y)\xi(y,u)\leq n+3\}},
\end{eqnarray}
since $h'_z$ is uniformly bounded and $\phi_n(z)=0$
for $z\geq n+3$.

Applying finally the Leibniz formula,
using (\ref{fni}), (\ref{bxi}) and (\ref{rr}), 
we get, for $l=1,...,k$, for $u \in (0,h(y,0))$,
\begin{eqnarray*}
| (\mu_n)^{(l)}_u (y,u) | &\leq&  K \gamma(y) 
\sum_{r=0}^l | \xi^{(l+1-r)}_u(y,u) | \times 
|[\phi_n(\gamma(y)\xi(y,u))]^{(r)}_u | \ala
& \leq & K \gamma(y)  |h'_z(y,\xi(y,u))|^{-2l-1}
\indiq_{\{ \xi(y,u) \leq (n+3)/\gamma(y)\}}
\end{eqnarray*}
and the formula obviously holds for $l=0$. Finally,
since $h'_z$ is uniformly bounded, and performing the substitution
$z=\xi(y,u)$, i.e. $u=h(y,z)$, we obtain, 
recalling that $\mu_n(y,\rr) \in [n,n+2]$,
\begin{eqnarray*}
&&\frac{1}{\mu_n(y,\rr)}||\mu_n(y,.)||_\wbk \ala
&&\hskip1cm \leq \frac{K \gamma(y)}{n}
\intrd \sum_{l=0}^k  |h'_z(y,\xi(y,u))|^{-2l-1}
\indiq_{\{ 0<\xi(y,u) \leq \frac{ (n+3)}{\gamma(y)}\}} du \ala 
&&\hskip1cm\leq \frac{K \gamma(y)}{n}
\intrd |h'_z(y,\xi(y,u))|^{-2k-1}
\indiq_{\{ 0<\xi(y,u) \leq \frac{ (n+3)}{\gamma(y)}\}} du \ala
&&\hskip1cm\leq \frac{K \gamma(y)}{n} \intrd
|h'_z(y,z)|^{-2k} \indiq_{\{ 0<z\leq \frac{ (n+3)}{\gamma(y)}\}} dz
\leq KCe^{3\theta} (1+|y|^p)e^{\theta n},
\end{eqnarray*}
where we finally used (\ref{albr})
(because $I_n(y)=[0,n/\gamma(y)]$ here).
This proves that $(H_{k,p,\theta})$-(iii) holds.
\end{proof}


\end{document}